\def\Re{\mathbb R}
\newcommand*{\Positives}{\mathbb{N}}
\providecommand{\remove}[1]{}
\newcommand{\lemlab}[1]{\label{lemma:#1}}
\newcommand{\alglab}[1]{\label{algo:#1}}
\newcommand{\algref}[1]{Algorithm~\ref{algo:#1}}
\theoremstyle{plain}
\newtheorem{theorem}{Theorem}[section]
\newtheorem{lemma}[theorem]{Lemma}
\newtheorem{proposition}[theorem]{Proposition}
\newtheorem{corollary}[theorem]{Corollary}
\newtheorem{fact}[theorem]{Fact}
\theoremstyle{definition}
\newtheorem{definition}[theorem]{Definition}
\theoremstyle{remark}
\newtheorem{remark}[theorem]{Remark}
\newcommand{\R}{\mathcal{R}}
\newcommand{\E}{\mathcal{E}}
\renewcommand{\L}{\mathcal{L}}
\renewcommand{\P}{\mathcal{P}}
\newcommand{\calC}{\mathcal{C}}
\newcommand{\D}{\mathcal{D}}
\newcommand*{\Lfam}{\mathcal{L}}%
\newcommand{\chum}{{{ch_{\text{\textup{um}}}}}}
\newcommand{\chcf}{{{ch_{\text{\textup{cf}}}}}}
\newcommand{\chC}{{{ch_{\calC}}}}
\newcommand{\chium}{{{\chi_{\text{\textup{um}}}}}}
\newcommand{\chicf}{{{\chi_{\text{\textup{cf}}}}}}
\newcommand{\chiC}{{{\chi_{\calC}}}}
\newcommand{\cfch}{{{ch_{\text{\textup{cf}}}}}}
\newcommand{\cf}{{{\chi_{\text{\textup{cf}}}}}}
\newcommand{\ch}{{{ch}}}
\newcommand*{\HGp}[1]{H^{\text{\textup{path}}}_{#1}}%
\newcommand{\cardin}[1]{\lvert {#1} \rvert}
\newcommand{\card}[1]{\lvert {#1} \rvert}
\newcommand{\UComp}{{\mathcal{U}}}
\begin{document}

\title{The potential to improve the choice: \\
       list conflict-free coloring for geometric hypergraphs}

\author{%
Panagiotis Cheilaris\thanks{Center for Advanced Studies in Mathematics,
Mathematics department,
Ben-Gurion University, Be'er Sheva 84105, Israel. 
\texttt{panagiot@math.bgu.ac.il}
}
\and Shakhar Smorodinsky\thanks{Mathematics department, Ben-Gurion University,
Be'er Sheva 84105, Israel.
\texttt{shakhar@math.bgu.ac.il}
}
\and Marek Sulovsk{\'y}\thanks{Institute of Theoretical Computer Science,
ETH Zurich, 8092, Switzerland.
Research supported by the Swiss National Science Foundation (SNF project 200020-125027).
\texttt{smarek@inf.ethz.ch}
}%
}

\date{}
\maketitle


\begin{abstract}
Given a geometric hypergraph (or a range-space) $H=(V,\cal E)$, a coloring of its vertices is
said to be conflict-free if for every hyperedge $S \in \cal E$
there is at least one vertex in $S$ whose color is distinct from
the colors of all other vertices in $S$. The study of this notion
is motivated by frequency assignment problems in wireless
networks. We study the list-coloring (or choice)
version of this notion. In this version,
each vertex is associated with a set of (admissible) colors and it is allowed to be colored
only with colors from its set. List coloring arises naturally in the
context of wireless networks.

Our main result is a list coloring algorithm based on a new
potential method. The algorithm produces a stronger unique-maximum
coloring, in which colors are positive integers and
the maximum color in every hyperedge occurs uniquely.
As a corollary, we provide asymptotically sharp bounds on the size of the lists required to assure the existence of such
unique-maximum colorings
for many geometric hypergraphs (e.g., discs or pseudo-discs in the plane or points with respect to discs). Moreover, we provide an algorithm, such that, given a family of lists with the appropriate sizes,
computes such a coloring from these lists.
\end{abstract}


\section{Introduction and preliminaries}
Before introducing our results, let us start with several
definitions and notations that will be used throughout the paper.

\begin{definition}
Let $H=(V,\E)$ be a hypergraph and let $C$ be a coloring
$C \colon V \rightarrow \Positives$:
\begin{itemize}
\item We say that $C$ is a {\em proper coloring} if for every hyperedge $S \in
\E$ with $\card{S}\geq 2$ there exist two vertices $u,v \in S$
such that $C(u) \neq C(v)$. That is, every hyperedge with
at least two vertices is non-monochromatic.

\item We say that $C$ is a {\em conflict-free coloring} (cf-coloring in short) if for
every hyperedge $S \in \E$ there exists a color $i \in \Positives$
such that $\card{S \cap C^{-1}(i) }=1$. That is, every
hyperedge $S \in \E$ contains some vertex whose color is unique
in $S$.

\item We say that $C$ is a {\em unique-maximum coloring} (um-coloring in short) if for
every hyperedge $S \in \E$, $\card{S \cap C^{-1}(\max_{v \in S}C(v)) }=1$.
That is, in every
hyperedge $S \in \E$ the maximum color in $S$ is unique in $S$.

\end{itemize}
\end{definition}

We denote by $\chi(H)$, $\chicf(H)$, $\chium(H)$
the minimum integer $k$ for which $H$ admits
a proper, a conflict-free, a unique-maximum coloring, respectively,
with a total of $k$ colors. Obviously, every um-coloring of $H$ is a
cf-coloring of $H$ which is also a proper coloring of $H$,
but the converse is not necessarily true. Thus, we have:
$
\chi(H) \leq \chicf(H) \leq \chium(H)
$.
%

\paragraph{Conflict-free coloring.}
The study of cf-coloring was initiated in \cite{ELRS} and \cite{SmPHD} and was further studied in many settings (see, e.g.,
\cite{HURTADO,cf9,AS08,BCOScpc2010,cf7,CKS2009talg,cf5,HS02,Lev-TovP09,CFPT09,cf1}).
The study was initially motivated by its application to frequency
assignment for cellular networks. A cellular network consists of
two kinds of nodes: \emph{base stations} and \emph{mobile
clients}. Base stations have fixed positions and provide the
backbone of the network; they can be modeled, say, as discs in
the plane that represent the area covered by each
base station's antenna. Every
base station emits at a fixed frequency. If a client
wants to establish a link with a base station, it has to tune
itself to the base station's frequency. Clients, however, can be
in the range of many different base stations. To avoid
interference, the system must assign frequencies to base stations
in the following way: For any point $p$ in the plane
(representing a possible location of a client), there must be at least one
base station which covers $p$ and with a frequency that is not
used by any other base station covering $p$. Since frequencies
are limited and costly, a scheme that reuses frequencies, where
possible, is desirable. Let us formulate this in the language of
hypergraph coloring. Let $D$ be the set of discs representing the
antennas. We thus seek the minimum number of colors $k$ such
that one can assign each disc with one of the $k$ colors so that
in every point $p$ in the union of the discs in $D$, there is at
least one disc $d \in D$ that covers $p$ and whose color is
distinct from all the colors of other discs containing $p$. This
is equivalent to finding the cf-chromatic number of a certain
hypergraph $H=H(D)$ whose vertex set is $D$ and whose hyperedges
are defined by the Venn diagram of $D$.
Below, we give a formal definition for $H(D)$.

\paragraph{Geometric hypergraphs.}
Let $P$ be a set of $n$
points in the plane and let $\R$ be a family of regions in the
plane (such as all discs, all axis-parallel rectangles, etc.). We
denote by $H=H_{\R}(P)$ the hypergraph on the set $P$ whose
hyperedges are all subsets $P'$ that can be cut off from $P$ by a
region in $\R$. That is, all subsets $P'$ such that there exists
some region $r \in \R$ with $r\cap P = P'$. We refer to such a
hypergraph as the hypergraph {\em induced by $P$ with respect to
$\R$}.

For a finite family $\R$ of planar regions, we denote by $H(\R)$
the hypergraph whose vertex set is $\R$ and whose hyperedge set
is the family $\{\R_p \mid p \in \Re^2 \}$ where $\R_p \subseteq
\R$ is the subset of all regions in $\R$ that contain $p$. We
refer to such a hypergraph as the hypergraph {\em induced by
$\R$}.

Consider, for example, the (infinite) family $D$ of all discs in the
plane. In \cite{ELRS}, it was proved that
for any finite set $P$ of $n$ points, we have
$\cf(H_{D}(P)) = O(\log n)$. Similar questions can be asked
for other families of geometric hypergraphs where one is
interested in bounds on any of the chromatic numbers defined earlier.

\paragraph{Unique-maximum coloring.}
Most cf-coloring algorithms in the literature
produce unique-maximum colorings (which are stronger than
conflict-free colorings). The main reason for this approach is that
unique-maximum colorings have more structure than conflict-free
colorings, and thus it seems easier to argue about them in proofs.
One
interesting question is how can a non-unique-maximum conflict-free
coloring improve on a unique-maximum coloring, with respect to the
number of colors used and this line of research has been pursued
in \cite{ChKParxiv2010,ChT2010ciac}.

\paragraph{List coloring.}
Until now, research on cf-coloring was carried out under the assumption that we can use any color from some global set of colors.
The goal was to minimize the total number of colors used. In real
life, it makes sense to assume that each antenna in the wireless
network is further restricted to use a subset of the available
spectrum. This restriction might be local (depending, say, on the
physical location of the antenna). Hence, different antennas
may have different subsets of (admissible) frequencies available for them.
Thus, it makes sense to study the list version of conflict-free
coloring. That is, assume further that
each antenna $d \in D$ is associated with a subset $L_d$ of
frequencies. We want to assign
to each antenna $d$
a frequency that is taken from its allowed set $L_d$.
The following problem
thus arises:
\textsl{%
What is the minimum number $f=f(n)$ such that given
any set $D$ of $n$ antennas (represented as discs) and any family
of subsets of positive integers $\L=\{L_d \}_{d \in D}$ associated with
the antennas in $D$, the following holds: If each subset $L_d$ is
of cardinality $f$, then one can cf-color the hypergraph $H=H(D)$
from $\L$}.
In what follows, we give a formal definition of the
coloring model.

\begin{definition}
Let $H=(V,\E)$ be a hypergraph and let $\L = \{L_v\}_{v \in V}$
be a family of $\card{V}$ subsets of positive integers. We say
that $H$ \emph{admits a cf-coloring from} $\L$
if there exists a cf-coloring
$C \colon  V \rightarrow \Positives$ such that
$C(v) \in L_v$ for every $v \in V$.
Analogous definitions apply for the notions of a hypergraph $H$
\emph{admitting
a proper} or \emph{a um-coloring from} $\L$.
\end{definition}

\begin{definition}
We say that a hypergraph $H=(V,\E)$ is {\em $k$-cf-choosable}
if for every family
$\L=\{L_v\}_{v \in V}$ such that $\card{L_v}\geq k$ $\forall v \in
V$, $H$ admits a cf-coloring
from $\L$.
Analogous definitions apply for the notions of a hypergraph $H$ being
\emph{$k$-choosable} or \emph{$k$-um-choosable}.
\end{definition}

In this paper we are interested in the minimum number $k$ for
which a given geometric hypergraph is $k$-cf-choosable
(respectively, $k$-choosable, $k$-um-choosable).
We refer to this number as the {\em cf-choice
number}
(respectively, {\em choice number} and {\em um-choice number})
of $H$ and denote
it by $\cfch(H)$ (respectively, $ch(H)$ and $\chum(H)$).
Obviously, if the
cf-choice number
of $H$ is $k$
then $\cf(H) \leq k$, as one can cf-color
$H$ from $\L=\{L_v\}_{v \in V}$ where for every $v$ we
have $L_v = \{1,\ldots,k\}$
(the same can be said for proper and um colorings).
Thus,
\begin{equation}
\ch(H) \geq \chi(H), \quad
\cfch(H) \geq \cf(H), \quad
\chum(H) \geq \chium(H).
   \label{eq:choicechi}
\end{equation}
It is also easy to see that all of those parameters are upper-bounded by the number of vertices of the underlying hypergraph.

The study of list coloring for the special case of graphs, i.e.,
$2$-uniform hypergraphs, was initiated in
\cite{ERT1979,Vizing1976choice}. List proper coloring of
hypergraphs has been studied more recently, as well; see, e.g.,
\cite{KV2001chrandhyp}. We refer the reader to the survey of Alon
\cite{Alon93restrictedcolorings} for more on list coloring of
graphs.

\paragraph{Our results.}
In this paper we study the choice number, the cf-choice number, and the um-choice number of
hypergraphs.
We focus mainly on
geometric hypergraphs.
%
%

Our main result is an asymptotically tight
bound of $O(\log n)$ on the um-choice number of $H(\R)$ when $\R$
is a family of $n$ planar Jordan regions with linear union-complexity. In order to obtain the above result,
in Section~\ref{sec:potential},
we introduce a potential method for list um-coloring hypergraphs
that has also other applications.
In Section~\ref{sec:geom}, we apply the potential method in list um-coloring
several geometric hypergraphs of interest.
In Section~\ref{sec:separator}, we obtain an asymptotically tight upper bound on
the cf-choice number of hypergraphs consisting of the vertices of
a planar graph together with all subsets of vertices that form a
simple path in the graph
(see \cite{ChT2010ciac} for applications of this class
of hypergraphs); it is not possible to prove a similar upper bound
on the um-choice number and indeed we show that the um-choice
number of a hypergraph induced by paths of a planar graph can be
substantially higher.
In Section~\ref{sec:upperboundfewedges}, using the list coloring approach,
we prove tight upper bounds on the um-choice number in terms of
(a)~the number of hyperedges in the hypergraph or
(b)~the maximum degree of a vertex.
These results extend results of
\cite{CheilarisCUNYthesis2009,CFPT09}.
Moreover,
the list coloring approach allows us to provide a more concise proof.
In Section~\ref{sec:chcol}, we provide a general bound on
the cf-choice number of any hypergraph in terms of its
cf-chromatic number. We show that for any hypergraph $H$ (not
necessarily of a geometric nature) with $n$ vertices we have:
$
\cfch(H)\leq \cf(H) \cdot \ln n +1
$.
The proof of this fact uses a
probabilistic argument, which is an extension of a
probabilistic argument first given in \cite{ERT1979}. There, it
was proved that the choice-number of every bipartite graph with
$n$ vertices is $O(\log n)$. Our argument can be generalized
to a large natural class of colorings (i.e., not just conflict-free),
however, we note that such a bound is not possible for $\chum$.
Finally, in Section~\ref{sec:listnmgeom},
we study the (proper) choice number of
several geometric hypergraphs and show that many of the known
bounds for the proper coloring of the underlying hypergraphs hold
in the context of list-coloring as well.

\section{A potential method for list um-coloring}\label{sec:potential}
Let us start with a simple example of a hypergraph which can be viewed as induced by points on the line
with respect to all intervals.
Let $[n] = \{1,\dots,n\}$.
For $s \leq t$, $s, t \in [n]$, we define the {(discrete) interval}
$[s,t] = \{i \mid s \leq i \leq t\}$.
The \emph{discrete interval hypergraph} $H_n$ has vertex set $[n]$ and
hyperedge set
$\{  [s,t]  \mid {s \leq t}\text{, }{s, t \in [n]} \}$.
It is not difficult to prove that $\cf(H_n) = \lfloor \log_2 n
\rfloor + 1$ (see, e.g., \cite{ELRS,SmPHD}). Therefore,
from inequality~\eqref{eq:choicechi}, we
have the lower bound $\cfch(H_n) \geq \lfloor \log_2 n \rfloor +
1$.
As a warmup, we prove that the above lower bound is tight:

\begin{proposition}\label{proposition:intervalchcf}
For every $n \geq 1$, $\cfch(H_n) \leq \lfloor \log_2 n \rfloor +1 $.
\end{proposition}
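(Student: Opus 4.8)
The plan is to prove the bound by induction on $n$, exploiting the recursive structure of the discrete interval hypergraph. The key observation is that $H_n$ has a natural "divide at the middle" structure: any interval $[s,t]$ either lies entirely in $\{1,\dots,m\}$, lies entirely in $\{m+1,\dots,n\}$, or contains the midpoint-region. So I would pick $m = \lceil n/2 \rceil$ (or $\lfloor n/2\rfloor$; the exact choice is a routine detail to optimize against the floor function), split $[n]$ into a left block $L = \{1,\dots,m\}$ and a right block $R = \{m+1,\dots,n\}$, each of size at most $\lceil n/2\rceil$, so that $\lfloor \log_2 |L|\rfloor + 1$ and $\lfloor \log_2 |R|\rfloor + 1$ are each at most $\lfloor \log_2 n\rfloor$.

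Given lists $\L = \{L_v\}_{v\in[n]}$ each of size at least $k := \lfloor\log_2 n\rfloor + 1$, the idea is: first pick one "special" vertex $v^\star$ — a convenient choice is a vertex in the block whose union of lists... actually, more carefully: I would look for a color $c$ and a vertex $v^\star$ that is "close to the middle" such that $c \in L_{v^\star}$, color $v^\star$ with $c$, and then remove $c$ from all other lists. The point of placing the uniquely-colored vertex near the middle is that every interval $[s,t]$ that straddles the cut contains $v^\star$, and we must guarantee $c$ is unique among such intervals. For that it suffices that no vertex other than $v^\star$ receives color $c$; achieving this for free is not possible, so instead we delete $c$ from every other list and recurse on $L$ and $R$ separately with the reduced lists. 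Each of those sub-instances then needs lists of size $\lfloor\log_2\lceil n/2\rceil\rfloor + 1 \le \lfloor \log_2 n\rfloor = k - 1$, and since we removed at most one color, the reduced lists have size at least $k - 1$. This is exactly enough.

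The cleanest way to organize the argument: let $m=\lceil n/2\rceil$ and consider the vertex $m$ itself as the separator (so it is the rightmost vertex of the left block, and every straddling interval contains it). Actually, since the induction needs the sub-blocks to have size $\le \lceil n/2\rceil$, one should check: $L=\{1,\dots,m\}$ has size $m=\lceil n/2\rceil$ and $R=\{m+1,\dots,n\}$ has size $n-m=\lfloor n/2\rfloor \le \lceil n/2\rceil$, and $\lfloor\log_2\lceil n/2\rceil\rfloor+1 \le \lfloor\log_2 n\rfloor = k-1$ holds for all $n\ge 2$ (a one-line check via $\lceil n/2\rceil \le 2^{\lfloor\log_2 n\rfloor -1 + 1}/... $ — I'll just verify $\lceil n/2 \rceil \le 2^{k-2}$, equivalently $n \le 2^{k-1} = 2^{\lfloor \log_2 n\rfloor}$, which is the definition of the floor). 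Base case $n=1$ is trivial: $k=1$, color the single vertex with its one available color; the only hyperedge $\{1\}$ trivially has a unique maximum. The recursive coloring is then: arbitrarily pick $c \in L_m$, set $C(m)=c$, delete $c$ from all other lists (they stay of size $\ge k-1$), recursively um-color $H$ restricted to $L\setminus\{m\}$ — wait, $m$ is colored so we recurse on $\{1,\dots,m-1\}$ of size $m-1 < \lceil n/2\rceil$ and on $\{m+1,\dots,n\}$ — with a global shift ensuring the recursive colors all stay below $c$. This last point — making sure the maximum in a straddling interval is genuinely $c$ and not some larger recursively-assigned color — is the main thing to handle cleanly: either reserve $c$ to be the largest color overall by a relabeling/monotone-embedding argument, or observe that we are free to choose which color in $L_m$ to use and the lists are unbounded sets of positive integers, so we can rescale. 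I would handle this by strengthening the inductive hypothesis slightly, e.g., proving the statement in the form "for any target threshold, one can um-color so that all colors used lie below the threshold," or equivalently by noting um-colorings are invariant under applying any fixed strictly increasing map to the color set, so after recursively coloring the two halves we can monotonically remap their colors into an interval of integers all smaller than $c$. That bookkeeping is the only subtle step; everything else is the clean halving recursion matched against the $\lfloor\log_2 n\rfloor+1$ bound.

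The main obstacle I anticipate is precisely this interaction between the list constraint and the unique-\emph{maximum} requirement: unlike plain conflict-free coloring, we cannot just declare the separator's color "unique" — we must ensure it dominates, and we must do so while the colors available to the separator are dictated by its list $L_m$, which might consist of small integers. Resolving it requires the observation that um-colorings compose under order-preserving relabelings (so the recursive sub-colorings can be pushed down below $c$), combined with deleting $c$ from the other lists to prevent any collision. Once that is set up, matching $\lfloor\log_2\lceil n/2\rceil\rfloor+1 \le \lfloor\log_2 n\rfloor$ closes the induction.
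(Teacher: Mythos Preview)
Your core recursion---pick the median vertex, color it with any color from its list, delete that color from all other lists, and recurse on the two halves---is exactly the paper's proof. However, you have misread the statement: it bounds $\cfch(H_n)$, the \emph{conflict-free} choice number, not $\chum(H_n)$. For conflict-free coloring the ``main obstacle'' you identify simply does not arise: once the median's color $c$ has been deleted from every other list, any interval containing the median has $c$ appearing uniquely, and that is all the cf condition requires---it need not be the maximum. The paper's proof is precisely this short argument.

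Your proposed fix for the unique-maximum version, namely applying an order-preserving relabeling to push the recursive colors below $c$, is actually incorrect in the list setting: the final coloring must satisfy $C(v)\in L_v$ for every $v$, and applying any nontrivial monotone map to the recursively assigned colors would in general destroy this membership. Indeed, immediately after this proposition the paper remarks that the divide-and-conquer approach is ``doomed to fail'' for $\chum$, precisely because the median's list may contain only small colors, and it then develops a separate potential method (Theorem~\ref{thm:um_lists}) to handle the um-choice number.
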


\begin{proof}
Assume, without loss of generality, that $n=2^{k+1} -1$. We will show
that $H_n$ is $k+1$ cf-choosable. The proof is by induction on
$k$. Let $\L = \{L_i\}_{i \in [n]}$, such that $\card{L_i}=k+1$,
for every $i$. Consider the median vertex $p = 2^k$.
Choose a color $x \in L_p$ and assign it to $p$. Remove $x$ from
all other lists (for lists containing $x$), i.e., consider $\L' =
\{L'_i\}_{i \in [n] \setminus p}$ where $L'_i =
L_i\setminus\{x\}$. Note that all lists in $\L'$ have size at
least $k$. The induction hypothesis is that we can cf-color any
set of points of size $2^k-1$ from lists of size $k$.
Indeed, the number of vertices smaller (respectively, larger)
than $p$ is exactly $2^k -1$. Thus, we
cf-color vertices smaller than $p$ and independently vertices
larger than $p$, both using colors from the lists of $\L'$.
Intervals that contain the median vertex $p$ also have the
conflict-free property, because
color $x$ is used only in $p$.
This completes the induction step and hence the
proof of the proposition.
\end{proof}

We now turn to the more difficult problem of bounding the
um-choice number. Even for the discrete interval hypergraph $H_n$,
a divide and conquer approach, along the lines of the proof of
Proposition~\ref{proposition:intervalchcf} is doomed to fail.
In such an approach, some vertex close to the median must be found,
a color must be assigned to it from its list, and this color must be
deleted from all other lists.
However, vertices close to the median might have only `low'
colors in their lists. Thus, while we are guaranteed
that a vertex close to the median is uniquely colored for intervals containing it,
such a unique color is not necessarily the maximal color for such intervals.

Instead, we use a new approach. Our approach provides a general framework for um-coloring hypergraphs
from lists. Moreover, when applied to many
geometric hypergraphs, it provides
asymptotically tight bounds for the um-choice number. 
First, 
we need the definitions of an independent set and of 
an induced sub-hypergraph.

\begin{definition}
Given a hypergraph $H=(V,\E)$, a subset $U \subseteq V$ is called
an \emph{independent set} in $H$ if it does not contain any hyperedge of cardinality at least $2$, i.e.,
for every $S \in \E$ with
$\card{S} \geq 2$, we have $S \not\subseteq U$.
Note that each color class of a proper coloring of $H$ is an independent set in $H$.
\end{definition}

\begin{definition}
Let $H=(V,\E)$ be a hypergraph. For a subset $V' \subseteq V$,
we refer to $H[V'] := (V',\{S\cap V' \mid S \in \E \})$ as the \emph{sub-hypergraph} of $H$ induced by $V'$.
\end{definition}

Below, we give an informal description of the approach, which is then
summarized in Algorithm~\ref{algo:umchoice_general}. 

We start by sorting the colors in the union of all lists in increasing order.
Let $c$ denote the minimum color.
Let $V^c \subseteq V$ denote the subset of vertices containing $c$ in their lists.
Note that $V^c$ might contain very few vertices, in fact, it might be that $\cardin{V^c} = 1$.
We simultaneously color
a suitable subset $U \subseteq V^c$ of vertices in $V^c$ with $c$.
We make sure that $U$
is independent in the hypergraph $H[V^c]$.
The exact way in which we choose $U$ is crucial to the performance of the algorithm and is discussed below.
Next, for the uncolored vertices in $V^c \setminus U$, we remove the color $c$ from their lists.
This is repeated for every color in the union $\bigcup_{v \in V}L_v$ in increasing order of the colors. The algorithm stops
when all vertices are colored. Notice that such an algorithm might run into a problem, when all colors
in the list of some vertex are removed before this vertex is colored. Later, we show that if we choose
the subset $U \subseteq V^c$ in a clever way and the lists are sufficiently large, then we avoid such a problem.

\begin{algorithm}[htb!]
\caption{UMColorGeneric($H$, $\L$): Unique-max color hypergraph
         $H = (V, \E)$ from family $\Lfam$}
\alglab{umchoice_general}
\begin{algorithmic}
  \WHILE{$V \neq \emptyset$}
  \STATE $c \leftarrow \min \bigcup_{v \in V} L_v$
  \COMMENT{$c$ is the minimum color in the union of the lists}
  \STATE $V^c \leftarrow \{v \in V \mid c \in L_v\}$
  \COMMENT{$V^c$ is the subset of remaining vertices containing $c$ in their lists}
  \STATE $U \leftarrow$ a ``good'' independent subset of the induced hypergraph $H[V^c]$
  \FOR{$x\in U$}
  \STATE $f(x) \leftarrow c$
  \COMMENT{color it with color $c$}
  \ENDFOR
  \FOR[for every uncolored vertex, remove $c$ from its list]
      {$v \in V^c\setminus U$}
  \STATE $L_v \leftarrow L_v \setminus \{c\}$
  \ENDFOR
  \STATE $V \leftarrow V \setminus U$
  \COMMENT{remove the colored vertices}
  \ENDWHILE
  \RETURN $f$
\end{algorithmic}
\end{algorithm}

As mentioned, 
Algorithm \ref{algo:umchoice_general} 
might cause some lists to run out of colors before coloring
all vertices. However, if this does not happen, we
prove that the algorithm produces a um-coloring.

\begin{lemma}
Provided that the lists associated with the vertices do not run out of colors during the execution of \algref{umchoice_general}, then the algorithm produces a um-coloring
from $\L$.
\end{lemma}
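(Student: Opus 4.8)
The plan is to show that the coloring $f$ returned by \algref{umchoice_general} is well-defined (every vertex receives a color) and that it is a um-coloring, i.e., for every hyperedge $S \in \E$ the maximum color $\max_{v \in S} f(v)$ is attained by exactly one vertex of $S$. Well-definedness is immediate from the hypothesis: if no list runs out of colors, then each vertex $v$ stays in $V$ until it is placed into some good independent set $U$ and colored; since the union of the remaining lists strictly loses its minimum color in every iteration (either $v$ is colored, or $c$ is deleted from $L_v$), the while loop terminates with $V = \emptyset$, so every vertex is colored exactly once.

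For the um-property, I would fix a hyperedge $S$ and look at the iteration in which the \emph{last} vertex of $S$ gets colored; equivalently, consider the largest color $c^\ast$ that is assigned to a vertex of $S$ by the algorithm, and let $U^\ast$ be the good independent set chosen in that iteration. Every $v \in S$ with $f(v) = c^\ast$ lies in $U^\ast \cap S$, and I claim $\card{U^\ast \cap S} = 1$. First, $\card{U^\ast \cap S} \ge 1$ since $c^\ast$ is attained on $S$ by definition. For the upper bound: at the start of that iteration, let $V$ be the current vertex set and $S' = S \cap V$ the still-uncolored part of $S$. I want to argue $S' \in \E(H[V^{c^\ast}])$ as a hyperedge of the induced sub-hypergraph, so that independence of $U^\ast$ in $H[V^{c^\ast}]$ forces $\card{U^\ast \cap S'} \le 1$ — unless $\card{S'} \le 1$, in which case the bound is trivial. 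The key observation is that by maximality of $c^\ast$, \emph{no} vertex of $S$ is colored before this iteration (any earlier color assigned to a vertex of $S$ would be $< c^\ast$, contradicting that $c^\ast$ is the largest color on $S$... wait, that is not quite it) — more carefully, every vertex of $S$ colored in an earlier iteration received a color strictly smaller than $c^\ast$, hence is \emph{not} the maximum-color vertex, so it suffices to control the vertices of $S$ still present, namely $S' = S \cap V$. Since all of $S'$ must still be colorable, and the minimum color currently available overall is $c^\ast$ (that is the iteration's color $c$), every $v \in S'$ has $c^\ast \in L_v$, so $S' \subseteq V^{c^\ast}$; and $S' = S \cap V^{c^\ast}$, which is exactly a hyperedge of $H[V^{c^\ast}]$ (restriction of $S \in \E$ to $V^{c^\ast}$). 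Independence of $U^\ast$ in $H[V^{c^\ast}]$ then gives $\card{U^\ast \cap S'} \le 1$ whenever $\card{S'} \ge 2$, and the case $\card{S'} \le 1$ is handled directly.

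It then remains to combine these: the vertices of $S$ realizing the maximum color are precisely $U^\ast \cap S = U^\ast \cap S'$ (earlier-colored vertices of $S$ have strictly smaller color and vertices colored later also have strictly smaller color by maximality of $c^\ast$), and we have just shown this set has size exactly one. Hence $\max_{v\in S} f(v) = c^\ast$ is attained uniquely on $S$, so $f$ is a um-coloring from $\L$, as $f(v) \in L_v$ holds for every $v$ by construction.

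The main obstacle I anticipate is the bookkeeping around \emph{when} vertices of $S$ get colored relative to the iteration of color $c^\ast$: one must be careful that the ``last color on $S$'' is genuinely the maximum, that no vertex of $S$ is colored after that iteration, and that the still-present part $S \cap V$ restricts to an honest hyperedge of the induced hypergraph $H[V^{c^\ast}]$ rather than to the empty set or a singleton (the degenerate cases $\card{S \cap V} \le 1$ need a separate one-line treatment). Everything else is a direct unwinding of the definitions of independent set, induced sub-hypergraph, and the algorithm's loop invariants.
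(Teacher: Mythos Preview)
Your approach is the same as the paper's: look at the last iteration in which a vertex of $S$ is colored, equivalently the iteration whose color $c^\ast$ is maximal on $S$, and use independence of $U^\ast$ in $H[V^{c^\ast}]$.

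There is, however, a genuine gap in your justification that $S' = S \cap V \subseteq V^{c^\ast}$. You write ``the minimum color currently available overall is $c^\ast$ \dots\ every $v \in S'$ has $c^\ast \in L_v$,'' but this inference is false: $c^\ast = \min \bigcup_v L_v$ does not imply $c^\ast \in L_v$ for each individual $v$; a vertex in $S'$ could perfectly well have only colors strictly larger than $c^\ast$ left in its list. Relatedly, your remark that ``vertices colored later also have strictly smaller color by maximality of $c^\ast$'' is backwards---later iterations assign strictly \emph{larger} colors, not smaller ones.

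The repair is exactly the observation you already noted but then did not exploit: since colors strictly increase across iterations and $c^\ast$ is the maximum color appearing on $S$, this is the \emph{last} iteration in which any vertex of $S$ gets colored. Hence every $v \in S'$ must be colored in this very iteration, i.e., $S' \subseteq U^\ast \subseteq V^{c^\ast}$. Now $S' = S \cap V^{c^\ast}$ is a hyperedge of $H[V^{c^\ast}]$; if $\card{S'} \ge 2$, independence of $U^\ast$ forces $S' \not\subseteq U^\ast$, contradicting $S' \subseteq U^\ast$. Therefore $\card{S'} = 1$, and the maximum color on $S$ is unique. This is precisely the paper's argument.
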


\begin{proof}
Consider any hyperedge $S \in \E$. Consider the last iteration $t$
of the while loop
during which some vertex of $S$ was colored. 
Let $c$ denote the color chosen in that iteration. Note that $c$ is a maximal color in $S$. We need to prove that it is also unique in $S$. Let $V^c$ denote the subset of uncolored vertices (until iteration $t$) containing $c$ in their lists and let $U \subseteq V^c$ denote the independent set in $H[V^c]$ chosen to be colored with $c$.
  Note that $S\cap V^c$ is a hyperedge in $H[V^c]$. We need to show that $\cardin{S \cap V^c} =1$. Indeed, assume to the contrary that $\cardin{S \cap V^c} \geq 2$. Then, since $S\cap V^c$ is a hyperedge in $H[V^c]$ and $U$ is independent in $H[V^c]$, we must have $S\cap V^c \not\subseteq U$. Therefore, there must be a vertex $v \in (S\cap V^c) \setminus U$. This means that such a vertex $v\in S\cap V^c$ is not colored in iteration $t$. Hence, it is colored in a later iteration, a contradiction.
  \end{proof}

The key ingredient, which will determine the necessary size of the
lists of $\L$, is the particular choice of the independent set in the
above algorithm. We assume that the hypergraph $H=(V,\E)$
is hereditarily $k$-colorable
for some fixed positive integer $k$. That is, for
every subset $V' \subseteq V$, the sub-hypergraph $H[V']$ induced by $V'$ admits a proper $k$-coloring.
This is the case in many geometric hypergraphs.
For example,
hypergraphs induced by planar discs, or pseudo-discs, or, more generally,
hypergraphs induced by regions having linear union complexity
have such
a hereditary colorability property for some small constant $k$
(see Section~\ref{sec:geom} for details).
We must also put some condition on the size of the lists
in family $\Lfam = \{L_v\}_{v \in V}$. 
With some hindsight, we require 
\[ \sum_{v \in V} \lambda^{-\card{L_v}} < 1, \]
where $\lambda := \frac{k}{k-1}$.
We are ready to state the main theorem.

\begin{theorem}\label{thm:um_lists}
Let $H = (V, \E)$ be a hypergraph which is hereditarily
$k$-colorable and set $\lambda := \frac{k}{k-1}$. 
Let $\L = \{L_v\}_{v \in V}$, such that
$\sum_{v \in V} \lambda^{-\card{L_v}} < 1$.
Then, $H$ admits a unique-maximum coloring from $\L$.
\end{theorem}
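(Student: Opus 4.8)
The plan is to analyze Algorithm~\ref{algo:umchoice_general} and show that, under the stated condition on the list sizes, the lists never run out of colors; by the preceding lemma this immediately yields a um-coloring from $\L$. The crucial design choice, left open in the generic description, is how to pick the ``good'' independent set $U \subseteq V^c$ in each iteration. Here we exploit hereditary $k$-colorability: the induced hypergraph $H[V^c]$ admits a proper $k$-coloring, so its vertex set splits into $k$ independent classes, and at least one class contains at least a $\tfrac{1}{k}$ fraction of $V^c$. I would choose $U$ to be such a largest color class, so that $\card{U} \geq \card{V^c}/k$, equivalently the number of vertices that \emph{fail} to get colored $c$ and have $c$ deleted from their list is at most $\bigl(1-\tfrac1k\bigr)\card{V^c} = \card{V^c}/\lambda$.

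The heart of the argument is a potential-function (weighting) bound. For a vertex $v$ still alive at some point in the execution with current list $L_v$, assign it weight $\lambda^{-\card{L_v}}$, and let $\Phi$ be the sum of these weights over all currently uncolored vertices. Initially $\Phi = \sum_{v\in V}\lambda^{-\card{L_v}} < 1$ by hypothesis. I would then show $\Phi$ is non-increasing across each iteration of the while loop. In an iteration processing color $c$: vertices in $U$ are removed (their weight drops out entirely, only decreasing $\Phi$); vertices outside $V^c$ are untouched; and each of the at most $\card{V^c}/\lambda$ vertices in $V^c\setminus U$ loses one color, so its weight is multiplied by $\lambda$. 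The vertices of $V^c$ before the iteration each had weight at least $\lambda^{-\card{L_v}}$; the point is a localized accounting — comparing the total weight of $V^c\setminus U$ after the step to the total weight of $V^c$ before it. Since $\card{V^c\setminus U}\le \card{V^c}/\lambda$ and each surviving weight gets multiplied by $\lambda$, the new contribution of $V^c\setminus U$ is at most $\lambda \cdot$ (sum of their old weights) $\le \lambda\cdot\bigl(\text{old weight of the }\card{V^c}/\lambda\text{ lightest vertices of }V^c\bigr)$. One wants this to be at most the old total weight of $V^c$, which follows by a convexity/averaging argument: multiplying a $1/\lambda$-fraction of items by $\lambda$ and discarding the rest does not increase the sum, since the kept items can be taken to be the lightest ones (or, more simply, because $U$ was the heaviest class, but one must be a little careful since ``heaviest class'' and ``largest class'' need to be reconciled — I would actually pick $U$ to maximize its total weight among the $k$ color classes, which is at least $\Phi_{V^c}/k$, and this is the cleaner route). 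Hence $\Phi$ never increases, so $\Phi < 1$ throughout.

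Finally I would derive the contradiction: if some vertex $v$ ran out of colors, then just before the iteration in which its list became empty it had a list of size $1$, so at that moment its weight was $\lambda^{-1} = \tfrac{k-1}{k}$, and at the step where its last color is removed — wait, more carefully: a vertex is only removed from play when it is colored, so if its list ever becomes empty it has weight $\lambda^{0}=\lambda^{-0}$; rather, track it one step earlier. At the iteration where $c$ is the last color of $v$ and $v\notin U$, the vertex $v$ has $\card{L_v}=1$ before the step, contributing $\lambda^{-1}>0$ to $\Phi$, but this is consistent with $\Phi<1$. The real contradiction must come from the step that empties the list: immediately after, $v$ is an uncolored vertex with $\card{L_v}=0$ and weight $\lambda^{0}=1$, forcing $\Phi \ge 1$, contradicting $\Phi<1$. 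So no list is ever exhausted, and the lemma gives the conclusion.

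I expect the main obstacle to be the weight-monotonicity step: making the localized comparison rigorous requires choosing $U$ to be the color class of \emph{maximum total weight} (not merely maximum size) in the proper $k$-coloring of $H[V^c]$, so that the discarded part $V^c\setminus U$ has total old weight at most $(1-\tfrac1k)$ of the old weight of $V^c$, and then checking $\lambda(1-\tfrac1k) = \tfrac{k}{k-1}\cdot\tfrac{k-1}{k} = 1$ closes the bound exactly. The rest — the initialization, the reduction to ``lists don't run out'', and the final contradiction — is routine once this invariant is in place.
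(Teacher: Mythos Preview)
Your proposal is correct and follows essentially the same route as the paper: define the potential $\Phi = \sum_{v\ \text{uncolored}} \lambda^{-|L_v|}$, choose $U$ to be the color class of \emph{maximum total weight} in a proper $k$-coloring of $H[V^c]$, and verify that $\Phi$ is non-increasing via the identity $\lambda(1-\tfrac1k)=1$; the paper then concludes (as you do, contrapositively) that $\lambda^{-r_t(v)} \le \Phi < 1$ forces $r_t(v)>0$ throughout. Your initial detour through ``largest class by size'' is unnecessary and should be dropped in a clean write-up --- the paper goes straight to maximum-weight class, which is exactly the fix you identify yourself at the end.
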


\begin{proof}
We refine Algorithm~\ref{algo:umchoice_general}, by showing
how to choose a good independent set.

A natural choice of a good independent set would of course be the
largest one. Unfortunately, such a naive approach does not work.
Instead, we consider a potential function on subsets of uncolored vertices
and
we choose the independent set with the highest potential.
%
For an uncolored vertex $v \in V$,
let $r_t(v)$ denote the number of colors remaining
in the list of $v$ in the beginning of iteration $t$ of the algorithm.
Obviously, the value of $r_t(v)$ depends on the particular run
of the algorithm. 
For a subset of uncolored vertices $X \subseteq V$ 
in the beginning of iteration $t$, let
$P_t(X) := \sum_{v \in X}\lambda^{-r_t(v)}$.
We define the potential in the beginning of iteration $t$ to be
$P_t := P_t(V_t)$, 
where $V_t$ denotes the subset of all uncolored vertices 
in the beginning of iteration $t$.
Notice that the value of the potential in the beginning of the algorithm 
(i.e., in the first iteration) is
$P_1 =\sum_{v \in V} \lambda^{-\card{L_v}} < 1$.

Our goal is to show that, with the right choice of 
the independent set in each iteration, 
we can make sure that for any iteration $t$ and every vertex 
$v \in V_t$ the inequality $r_t(v) > 0$ 
holds.
In order to achieve this, we will show that,
with the right choice of the subset of vertices colored in each iteration,
the potential function $P_t$ is non-increasing in $t$.
This will imply that for any iteration $t$ 
and every uncolored vertex $v \in V_t$ 
 we have:
$$\lambda^{-r_t(v)} \leq P_t \leq P_1 < 1 $$
and hence $r_t(v) > 0$, as required.

Assume that the potential function is non-increasing up to iteration $t$.
Let $P_t$ be the value of the potential function in the beginning 
of iteration $t$
and let $c$ be the color associated with iteration $t$.
Recall that $V_t$ denotes the set of uncolored vertices that are 
considered in iteration $t$,
and $V^c \subseteq V_t$
denotes the subset of uncolored vertices that
contain the color $c$ in their lists.
Put $P' = P_t(V_t\setminus V^c)$
and $P'' = P_t(V^c)$. Note that $P_t= P'+ P''$.
Let us describe 
how we find the independent set of vertices to be colored at iteration $t$.
First, we find an auxiliary proper coloring of
the hypergraph $H[V^c]$ with $k$ colors.
Consider the color class $U$ which
has the largest potential $P_t(U)$.
Since the vertices in $V^c$ are partitioned into 
at most $k$ independent subsets $U_1, \ldots, U_k$
and $P''=\sum_{i=1}^k P_t(U_i)$,
then by the pigeon-hole principle there is an index $j$ for which
$P_t(U_j) \geq {P''}/{k}$.
We choose $U=U_j$ as the independent set to be colored at iteration $t$.
Notice that, in this case, the value $r_{t+1}(v) = r_{t}(v) - 1$ for 
every vertex
$v \in V^c\setminus U$,
and all vertices in $U$ are colored.
For vertices in $V_t \setminus V^c$, there is no change in the size of their lists.
Thus, the value $P_{t+1}$ of the potential function at the end of iteration $t$ (and in the beginning of iteration $t+1$) is
$P_{t+1} \leq P' + \lambda(1-\frac{1}{k})P''$.
Since 
$\lambda = \frac{k}{k-1}$, 
we have that $P_{t+1} \leq P' + P'' = P_t$, as required.
\end{proof}

We demonstrate the choice of the independent set according 
to the proof of Theorem~\ref{thm:um_lists}
in Algorithm~\ref{algo:umchoice_weights}, which is a refinement
of Algorithm~\ref{algo:umchoice_general}.

\begin{algorithm}
\caption{UMColor($H$, $\L$): Unique-max color the hypergraph
  $H = (V, \E)$  from $\Lfam$}
\alglab{umchoice_weights}
\begin{algorithmic}
  \REQUIRE $H$: a hereditarily $k$-colorable hypergraph
  \STATE $\lambda := \frac{k}{k-1}$
  \FOR{$v \in V$}
     \STATE $r(v) \gets \card{L_v}$
  \ENDFOR
  \WHILE{$V \neq \emptyset$}
  \STATE $c \leftarrow \min \bigcup_{v \in V} L_v$
  \COMMENT{$c$ is the minimum color in the union of the lists}
  \STATE $V^c \leftarrow \{v \in V \mid c \in \L_v\}$
  \STATE compute a proper coloring of $H[V^c]$
         with at most $k$ colors and
	 color classes $U_1, \dots,U_k$
  \STATE $U \gets \text{a color class among $U_1, \dots,U_k$ with
    $\max_{i \in \{1,\dots,k\}}\sum_{v\in U_i}\lambda^{-r(v)}$}$
  \FOR{$x\in U$}
  \STATE $f(x) \leftarrow c$
  \ENDFOR
  \FOR{$v \in V^c \setminus U$}
  \STATE $L_v \leftarrow L_v \setminus \{c\}$
  \COMMENT{remove the color $c$ from all lists of uncolored vertices in $V^c$}
  \STATE $r(v) \leftarrow r(v)-1$
  \COMMENT{update the number of remaining colors from the list of $v$}
  \ENDFOR
  \STATE $V \leftarrow V \setminus U$
  \COMMENT{remove the colored vertices}
  \ENDWHILE
  \RETURN $f$
\end{algorithmic}
\end{algorithm}

%

%

It is evident that 
Theorem~\ref{thm:um_lists} has an algorithmic version:

\begin{corollary}
  Let $H = (V,\E)$ be a hypergraph on $n$ vertices, which is hereditarily
  $k$-colorable and let $\L=\{L_v\}_{v \in V}$,
  such that $\sum_{v \in V} \lambda^{-\card{L_v}} < 1$.
  Assume that we have an efficient algorithm for
  $k$-proper-coloring every induced subhypergraph of $H$. 
  Then, we also have an efficient
  algorithm for unique-maximum coloring $H$ from $\L$.
\end{corollary}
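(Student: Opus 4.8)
The plan is to take \algref{umchoice_weights} itself as the claimed efficient algorithm and to verify two things: (i) it correctly outputs a unique-maximum coloring from $\L$, and (ii) it runs in polynomial time once we are handed the assumed $k$-proper-coloring procedure. For (i) there is essentially nothing new to do: \algref{umchoice_weights} is exactly the instantiation of the generic \algref{umchoice_general} in which the ``good'' independent set is chosen as the heaviest class of an auxiliary proper $k$-coloring of $H[V^c]$ --- precisely the choice analyzed in the proof of Theorem~\ref{thm:um_lists}. That proof shows that under the hypothesis $\sum_{v\in V}\lambda^{-\card{L_v}}<1$ the potential is non-increasing, so no list is ever exhausted, and the correctness lemma stated just before Theorem~\ref{thm:um_lists} then guarantees that the returned $f$ is a um-coloring from $\L$.

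For (ii) I would first bound the number of while-loop iterations. In each iteration the chosen class $U\subseteq V^c$ is nonempty (its weight is at least $P_t(V^c)/k>0$), so at least one vertex is colored and leaves $V$; moreover $c$ is removed from every remaining list that contained it, so $c$ never again becomes the minimum of the union. Hence the loop runs at most $\min\{n,\ \card{\bigcup_{v\in V}L_v}\}\le n$ times. Each individual iteration is clearly polynomial in $n$ and $\sum_v\card{L_v}$: finding $c$ and $V^c$, building $H[V^c]$, invoking the $k$-proper-coloring procedure (efficient by assumption), computing the at most $k$ class-weights $\sum_{v\in U_i}\lambda^{-r(v)}$ and comparing them, and finally relabeling, trimming lists, and deleting $U$ from $V$ are all linear-time operations. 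Multiplying the two bounds gives the desired polynomial running time.

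The one step needing a little care --- the closest thing to an obstacle --- is the comparison of the weights $\sum_{v\in U_i}\lambda^{-r(v)}$. Since $\lambda=k/(k-1)$ is a fixed rational, each such weight equals $\sum_{v\in U_i}(k-1)^{r(v)}/k^{r(v)}$ with exponents $r(v)\le\card{L_v}$, hence it is an exact rational with common denominator $k^{\,\max_v\card{L_v}}$ and polynomial bit-length; the algorithm only ever needs to compare $k$ such rationals in a given iteration, so no real-arithmetic subtlety arises. With that observed, the corollary is immediate.
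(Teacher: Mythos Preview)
Your proposal is correct and follows exactly the approach the paper intends: the paper simply states that ``it is evident that Theorem~\ref{thm:um_lists} has an algorithmic version'' and presents \algref{umchoice_weights} without further argument, so your write-up is in fact a more detailed version of the paper's own (implicit) proof. Your additional remarks on the iteration bound and on exact rational arithmetic for comparing class weights are valid refinements that the paper leaves to the reader.
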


We also show that the conditions of Theorem~\ref{thm:um_lists}
are, in a sense, best possible.
Remember the {discrete interval hypergraph} $H_n$ defined in the start 
of this section, with vertex set $V = [n]$.
It is easy to see that $H_n$ is hereditarily 2-proper-colorable,
i.e., $k=2$ and $\lambda=2$ in the notation of Theorem~\ref{thm:um_lists}.

\begin{theorem}
Given are $n$ positive integers
$x_1$, $x_2$, \ldots, $x_n$, such that 
$x_1 \geq x_2 \geq \dots \geq x_n$ and 
$\sum_{i \in [n]} 2^{-x_i} \geq 1$.
Then, there exists a family 
$\Lfam = \{L_i\}_{i \in [n]}$ 
(of sets of colors)
such that $\card{L_i} = x_i$ for every $i \in [n]$ 
and 
the discrete interval hypergraph $H_n$ does not admit 
a unique-maximum coloring from $\Lfam$.
\end{theorem}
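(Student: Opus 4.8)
The plan is to construct the lists explicitly and exhibit, for every candidate um-coloring, a hyperedge (interval) that violates the unique-maximum property. Since $\sum_i 2^{-x_i} \ge 1$ and the $x_i$ are non-increasing, I first reduce to the tight case: by discarding vertices or shrinking a single list I may assume $\sum_{i\in[n]} 2^{-x_i} = 1$ exactly (shrinking lists only makes the coloring task harder, so a counterexample in the tight case transfers). Now the identity $\sum 2^{-x_i}=1$ with $x_1\ge\cdots\ge x_n$ is exactly the Kraft equality, so there is a binary tree (equivalently, a prefix-free code) whose leaves, read left to right, have depths $x_1,\dots,x_n$. I would use this tree to drive the construction of the lists.

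The key idea is to make the lists nested in a way that forces the algorithm of Theorem~\ref{thm:um_lists} (or any um-coloring) into a cascade of forced choices. Concretely, assign to vertex $i$ the list $L_i = \{1,2,\dots,x_i\}$ — i.e. the $x_i$ smallest colors. Then color $1$ is available only to vertices with $x_i \ge 1$ (all of them), color $2$ only to those with $x_i\ge 2$, and so on; the vertices with a given color $c$ in their list form a prefix-closed family under the ordering, and crucially vertex $i$ can only receive a color in $\{1,\dots,x_i\}$. I would then argue that in any um-coloring, among the $2^{-x_i}$-weights the intervals force too many vertices to share the top color on some interval. The clean way to see this: think of the colors assigned as labels and track, for the interval $[1,j]$, the maximum color used; unique-maximality on every subinterval is a strong rigidity condition. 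In fact one can show inductively that a um-coloring of $[n]$ using, for each $i$, only colors $\le x_i$ cannot exist when $\sum 2^{-x_i}\ge 1$, by the following counting argument: if vertex $i$ gets color $c_i$, then for um-coloring the interval hypergraph the set of vertices receiving the maximum color must be a single vertex per interval, which forces the coloring to have the structure of an "interval-tree" labeling where the vertex colored $c$ splits its interval; a color-$c$ vertex can serve as the maximum only for intervals contained in a block where $c$ is the largest color available, and the number of such "roles" is controlled by $2^{-c}$-type weights. Summing, a valid um-coloring would give $\sum_i 2^{-c_i} \le$ something $< 1$, contradicting $c_i \le x_i$ and $\sum 2^{-x_i}\ge 1$.

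More carefully, I would set this up as follows. For a um-coloring $f$ of $H_n$, let $m$ be the unique vertex with $f(m)=\max_{v\in[n]} f(v)$ (unique because $[n]$ itself is a hyperedge). Then $f$ restricted to $[1,m-1]$ and to $[m+1,n]$ are again um-colorings of discrete interval hypergraphs, and recursively this builds a binary tree $T_f$ on the vertices where each node's subtree is an interval and the node holds the interval's maximum color. Along any root-to-node path the colors strictly decrease, so a vertex at depth $d$ in $T_f$ has color $\ge d+1$ below some chain — more usefully, the vertex at the root of a subtree of size $s$ has a color that must exceed the colors of everything in that subtree, and a standard Kraft-type estimate gives $\sum_{v}2^{-f(v)} \le 1$ with equality only for the "balanced" tree. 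Since $f(v)\le x_v$, we get $1 \le \sum_v 2^{-x_v} \le \sum_v 2^{-f(v)} \le 1$, forcing $f(v)=x_v$ for all $v$ and $T_f$ to be the balanced Kraft tree. But then I would check that this forced assignment fails: e.g. the two deepest leaves (siblings, both at depth $x_1=x_2=\cdots$) would both need to carry the same color as the maximum of the length-two interval joining them up the tree — a direct contradiction, or alternatively a small parity/adjacency obstruction at the bottom level kills the unique candidate.

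The main obstacle I anticipate is making the recursive/Kraft counting argument fully rigorous — specifically proving the inequality $\sum_v 2^{-f(v)}\le 1$ for any um-coloring of $H_n$ and pinning down exactly which equality case survives, since the naive "balanced tree" need not literally be a counterexample and one may instead need to track a slightly different potential (for instance weighting by $2^{-f(v)}$ but accounting for which colors are actually realizable as maxima given the list constraints $f(v)\le x_v$). A cleaner route, which I would try first, is to avoid characterizing all um-colorings and instead run an adversary argument mirroring the algorithm: process colors $1,2,\dots$ in increasing order, and show that with lists $L_i=\{1,\dots,x_i\}$ the adversary can always maintain the invariant that the "weight" $\sum 2^{-r(v)}$ of still-uncolored vertices stays $\ge 1$, which by the Kraft bound eventually forces some list to empty — this is the exact reversal of the potential-decrease step in the proof of Theorem~\ref{thm:um_lists}, and the factor $\lambda=2$ becomes tight precisely here.
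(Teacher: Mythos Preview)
Your construction has a genuine gap: the lists $L_i=\{1,\dots,x_i\}$ (bottom-aligned) do \emph{not} in general obstruct a um-coloring. Take $n=3$ with $(x_1,x_2,x_3)=(2,2,1)$, so $\sum 2^{-x_i}=1$. Your lists are $L_1=L_2=\{1,2\}$, $L_3=\{1\}$, and the coloring $f=(1,2,1)$ is a valid um-coloring of $H_3$ from these lists. The same example kills the Kraft-type inequality you invoke: for the um-coloring $f=(1,2,1)$ one has $\sum_v 2^{-f(v)}=5/4>1$, so ``$\sum_v 2^{-f(v)}\le 1$ for any um-coloring of $H_n$'' is simply false. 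The adversary/potential fallback cannot rescue this, since the object you are trying to prove non-colorable is in fact colorable.

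The paper's construction uses the \emph{top}-aligned lists $L_i=\{x_1+1-x_i,\dots,x_1\}$, so that every vertex has the global maximum color $x_1$ available and the lists are nested the other way (short lists contain only large colors). The point is that the unique vertex $m$ receiving the maximum color must now be compared against \emph{both} sides, and removing $x_1$ from all lists reproduces the same nested structure on each side; a short inductive argument together with a divisibility observation (the tail sum $P''=\sum_{i>m}2^{-x_i}$ is a multiple of $2^{-x_m}$, hence if $P''<1/2$ then in fact $P''\le 1/2-2^{-x_m}$) then forces $P'\ge 1/2$ and yields the contradiction. Your recursive-tree picture is the right skeleton, but it only becomes a proof once the lists are oriented so that the deletion of the top color is the inductive step; with bottom-aligned lists the short-list vertices sit at the boundary and impose almost no constraint.
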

\begin{proof}
We will prove by induction on $n$ that for the family 
$\Lfam = \{L_i\}_{i \in [n]}$ 
with 
$$L_i = \{c \mid x_1+1-x_i \leq c \leq x_1\},$$
for every $i \in [n]$,
$H_n$ does not admit a um-coloring from $\Lfam$.
For $n=1$, the above statement trivially holds.

For $n>1$, 
assume for the sake of contradiction that $H_n$ admits 
a um-coloring $C$ from $\Lfam$.
Without loss of generality, the 
maximum color in $C$ is $x_1$ and it occurs in exactly 
one vertex of $H_n$, say $m$, where $m \in [n]$.

Consider the following two discrete interval subhypergraphs of $H_n$:
$$ 
   \text{$H'$ induced by $[1,m-1]$} 
   \quad \text{and} \quad
   \text{$H''$ induced by $[m+1, n]$}, $$
and the families 
$$
 \Lfam'  = \{L'_i\}_{i \in [1,m-1]} = 
           \{L_i \setminus \{x_1\}\}_{i \in [1,m-1]}
   \quad \text{and} \quad 
 \Lfam'' = \{L''_i\}_{i \in [m+1,n]} = 
           \{L_i \setminus \{x_1\}\}_{i \in [m+1,n]}.
$$
Restricting coloring $C$ to each one of $H'$, $H''$  
shows that $H'$ admits a um-coloring from $\Lfam'$ and
$H''$ admits a um-coloring from $\Lfam''$.
By applying the inductive hypothesis for the smaller 
hypergraphs $H'$ and $H''$, we get
$$\sum_{1 \leq i < m} 2^{-\card{L'_i}} < 1 \quad \text{and} \quad
  \sum_{m < i \leq n} 2^{-\card{L''_i}} < 1.$$

We write 
\begin{equation}
\sum_{1 \leq i \leq n} 2^{-\card{L_i}} = P' + 2^{-\card{L_m}} + P'',
\label{eq:rewritesum}
\end{equation}
where $P' = \sum_{1 \leq i < m} 2^{-\card{L_i}} $
and 
$P'' = \sum_{m < i \leq n} 2^{-\card{L_i}} $.
If $P'' \geq 1/2$, then 
$$\sum_{m < i \leq n} 2^{-\card{L''_i}} = 
 \sum_{m < i \leq n} 2^{-(\card{L_i}-1)} = 2 P'' \geq 1,$$
which is a contradiction. Hence, $P'' < 1/2$.
Moreover, $P'' \leq 2^{-1} - 2^{-\card{L_m}}$, because 
$P''$ is a sum of multiples of $2^{-\card{L_m}}$
(remember that $\card{L_m} \geq \card{L_i}$ for $i \in [m+1,n]$).
Finally, from 
$\sum_{1 \leq i \leq n} 2^{-\card{L_i}} \geq 1$ 
and \eqref{eq:rewritesum}, we get
$$P' \geq 1 - P'' - 2^{-\card{L_m}} \geq 
         1 - 2^{-1} + 2^{-\card{L_m}} - 2^{-\card{L_m}} = 1/2,$$
which implies
$$\sum_{1 \leq i < m} 2^{-\card{L'_i}} = 
 \sum_{1 \leq i < m} 2^{-(\card{L_i}-1)} = 2 P' \geq 1,$$
which is a contradiction.
\end{proof}

\section{Geometric hypergraphs}
\label{sec:geom}

Consider a hypergraph $H=(V,\E)$ with $n$ vertices and a family 
$\Lfam = \{L_v\}_{v\in V}$, such that
for every $v \in V$,
$\card{L_v} > \log_{\lambda}{n} $. 
Then, $\sum_{v \in V} \lambda^{-\card{L_v}} < 1$ and thus we have the
following special case of Theorem~\ref{thm:um_lists}.

\begin{theorem}\label{thm:lambdachum}
Let $H = (V, \E)$ be a hypergraph which is hereditarily
$k$-colorable and set $\lambda := \frac{k}{k-1}$. 
Let $\L = \{L_v\}_{v \in V}$, such that
$\card{L_v} > \log_{\lambda}{n} $, for every $v \in V$.
Then, $H$ admits a unique-maximum coloring from $\L$.
\end{theorem}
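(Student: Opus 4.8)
The plan is to derive Theorem~\ref{thm:lambdachum} as an immediate consequence of Theorem~\ref{thm:um_lists}, by verifying that the hypothesis $\card{L_v} > \log_\lambda n$ for all $v$ implies the summability condition $\sum_{v \in V}\lambda^{-\card{L_v}} < 1$. So the entire content of the proof is a one-line arithmetic check, and then an appeal to the earlier theorem.

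Concretely, first I would note that since $\lambda > 1$ (because $k \geq 2$, so $\lambda = k/(k-1) \in (1,2]$), the function $x \mapsto \lambda^{-x}$ is strictly decreasing. Hence for each $v \in V$, the assumption $\card{L_v} > \log_\lambda n$ gives $\lambda^{-\card{L_v}} < \lambda^{-\log_\lambda n} = 1/n$. Summing this strict inequality over the $n$ vertices of $V$ yields
\[ \sum_{v \in V} \lambda^{-\card{L_v}} < \sum_{v \in V} \frac{1}{n} = n \cdot \frac{1}{n} = 1. \]

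Having established $\sum_{v \in V}\lambda^{-\card{L_v}} < 1$, the family $\L$ satisfies exactly the hypothesis of Theorem~\ref{thm:um_lists}, and $H$ is hereditarily $k$-colorable with the same $\lambda$, so Theorem~\ref{thm:um_lists} applies verbatim and guarantees that $H$ admits a unique-maximum coloring from $\L$. This completes the argument.

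There is essentially no obstacle here; the only minor point worth being careful about is the strictness of the inequality when $n = 1$ (then the condition reads $\card{L_v} > 0$, i.e., $\card{L_v} \geq 1$, and the single term $\lambda^{-\card{L_v}} \leq \lambda^{-1} < 1$, so the conclusion still holds). I would also remark — as the excerpt already does in the sentence preceding the statement — that this is stated separately only because the ``lists of size $> \log_\lambda n$'' formulation is the one directly used in the geometric applications of Section~\ref{sec:geom}, where $k$ is a small constant depending on the union-complexity bound.
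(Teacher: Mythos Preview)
Your proposal is correct and follows exactly the paper's approach: the paper explicitly states, in the sentence immediately preceding Theorem~\ref{thm:lambdachum}, that $\card{L_v} > \log_\lambda n$ for all $v$ gives $\sum_{v\in V}\lambda^{-\card{L_v}} < 1$, making the theorem a special case of Theorem~\ref{thm:um_lists}. Your write-up merely fills in the (trivial) arithmetic that the paper leaves implicit.
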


As a corollary of Theorem~\ref{thm:lambdachum} we obtain asymptotically optimal bounds on the um-choice number (hence, also on the cf-choice number)
of many geometric hypergraphs.


\begin{corollary}
Let $C$ be some absolute constant. Let $\R$ be a (possibly infinite) family of simple planar Jordan regions such that, for any $n$ and any subset $\R' \subseteq \R$ of $n$ regions, the union
complexity of $\R'$ is bounded by $Cn$.
Let $H = H(\R')$ be a hypergraph induced by a subset $\R' \subseteq \R$ of $n$ such
regions.
 Then \[\chum(H) = O(\log n).\]
\end{corollary}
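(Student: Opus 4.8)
The plan is to invoke Theorem~\ref{thm:lambdachum} and reduce the corollary to a single fact: the hypergraph $H(\R')$ induced by a family $\R'$ of planar Jordan regions with linear union complexity is hereditarily $k$-colorable for some absolute constant $k = k(C)$. Once this is established, Theorem~\ref{thm:lambdachum} with $\lambda = k/(k-1)$ immediately gives $\chum(H) \le \lceil \log_\lambda n \rceil + 1 = O(\log n)$, since $\lambda$ is a constant depending only on $C$. So the real content is the hereditary proper colorability.

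To establish hereditary $k$-colorability, first I would observe that for any subset $\R'' \subseteq \R'$, the induced hypergraph $H(\R'')$ is again induced by a family of at most $n$ regions from $\R$, and hence itself has union complexity bounded by $C|\R''|$; thus it suffices to prove that any such family admits a proper $k$-coloring, and hereditariness comes for free. Next I would recall the standard connection between union complexity and proper colorability of the induced hypergraph: if every subfamily of $m$ regions from a class has union complexity $O(m)$, then the induced hypergraphs have chromatic number bounded by an absolute constant. The cleanest route is via the theory developed by Smorodinsky (and by Har-Peled--Smorodinsky): a family with linearly many union-boundary vertices has the property that in any sub-collection there is a vertex (region) of ``low degree'' in the Delaunay-type graph on the boundary, which yields a constant-size independent set hitting a constant fraction, and iterating gives a proper coloring with $O(1)$ colors. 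Concretely, I would cite the result that $\chi(H(\R')) \le c_1$ for an absolute constant $c_1$ depending only on $C$, for any $\R'$ drawn from a class with union complexity $\le C m$ on $m$ regions; combined with the observation that this class is closed under taking subfamilies, hereditary $c_1$-colorability follows.

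The main obstacle, and the step deserving the most care, is justifying the jump from ``linear union complexity'' to ``constant chromatic number of the induced hypergraph.'' This is not entirely elementary: one needs the fact that for regions (pseudo-discs, or more generally Jordan regions with linear union complexity) the boundary of the union of any $m$ of them has $O(m)$ breakpoints, and that this forces, on average, some region to contribute few arcs to the union boundary, which in turn bounds how many other regions it can ``conflict'' with locally. The argument of Smorodinsky shows one can extract a proper coloring with $O(1)$ colors; I would either reproduce the short version of that argument or, preferably, cite it directly (it is exactly the kind of result the paper says it is relying on in Section~\ref{sec:geom}). Once that black box is in place, the corollary is a one-line consequence of Theorem~\ref{thm:lambdachum}, using that $k$ and hence $\lambda$ are absolute constants so the $\log_\lambda n$ bound is $O(\log n)$; the matching lower bound $\chum(H) = \Omega(\log n)$ already follows from $\chum(H) \ge \chium(H)$ together with the known $\Omega(\log n)$ lower bounds for $\chium$ of such geometric hypergraphs (e.g., via the embedded discrete interval hypergraph), so the bound is tight.
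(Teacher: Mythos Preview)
Your proposal is correct and follows essentially the same route as the paper: cite Smorodinsky's result that hypergraphs induced by regions with linear union complexity have chromatic number $O(1)$, then apply Theorem~\ref{thm:lambdachum}. You are in fact more careful than the paper's one-line justification, since you explicitly verify hereditary $k$-colorability by noting that any subfamily $\R'' \subseteq \R'$ again has linear union complexity and that $H(\R')[\R''] = H(\R'')$, so Smorodinsky's bound applies to every induced sub-hypergraph; the paper tacitly assumes this.
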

This follows from the fact that such a hypergraph has chromatic number $O(1)$ \cite{smoro} combined with Theorem~\ref{thm:lambdachum}.

\begin{corollary}
  Let $\D$ denote the (infinite) family of all planar discs.

   (i) Let $P$ be a set of $n$ points in the plane and let $H = H_{\D}(P)$ be the hypergraph induced by $P$ (with respect to $\D$). Then \[\chum(H) \leq \log_{4/3}n + 1 \approx 2.41 \log_2 n + 1.\]

   (ii) Let $\D' \subseteq \D$ be a set of $n$ discs. Then \[\chum(H(\D')) \leq \log_{4/3}n + 1.\]
\end{corollary}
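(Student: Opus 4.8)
The plan is to derive this corollary directly from Theorem~\ref{thm:lambdachum} by supplying the right value of $k$ for each of the two geometric hypergraphs in question. Recall that Theorem~\ref{thm:lambdachum} says: if $H$ is hereditarily $k$-colorable on $n$ vertices, then any list assignment with $\card{L_v} > \log_{\lambda} n$ (where $\lambda = k/(k-1)$) admits a um-coloring; in particular $\chum(H) \le \lfloor \log_\lambda n \rfloor + 1 \le \log_\lambda n + 1$. So for each part it suffices to exhibit $k = 2$, which gives $\lambda = 2/1 = \dots$ — no: we want $\lambda = 4/3$, i.e.\ $k = 4$. Thus the entire content of the proof is: \emph{both hypergraphs are hereditarily $4$-colorable}. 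That is the step I expect to be the crux.

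For part (i), the hypergraph $H_{\D}(P)$ is induced by a point set $P$ with respect to discs: its hyperedges are the subsets of $P$ cut off by some disc. I would invoke the classical fact (going back to the four-color-type / Delaunay-triangulation arguments used for this exact problem, e.g.\ in \cite{smoro}) that such hypergraphs are proper-$4$-colorable, and crucially that this property is hereditary: for any $P' \subseteq P$, the hypergraph $H_{\D}(P')$ is again a point-versus-discs hypergraph on $P'$, hence also $4$-colorable. (The cleanest route is: a disc hyperedge of size $\ge 2$ always contains an edge of the Delaunay triangulation of the point set, which is planar, so any proper vertex $4$-coloring of the Delaunay graph is a proper coloring of the hypergraph; and Delaunay triangulations behave well under taking subsets.) Applying Theorem~\ref{thm:lambdachum} with $k=4$, $\lambda = 4/3$ yields $\chum(H_\D(P)) \le \log_{4/3} n + 1$, and $\log_{4/3} 2 = 1/\log_2(4/3) \approx 2.409$ gives the stated numeric form $\approx 2.41 \log_2 n + 1$.

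For part (ii), the hypergraph $H(\D')$ is induced by a family $\D'$ of $n$ discs: vertices are the discs, hyperedges are $\{\,\{d \in \D' : p \in d\} : p \in \Re^2\,\}$. Discs form a family of pseudo-discs, hence have linear union complexity, so by the first corollary in the excerpt (or directly by \cite{smoro}) $H(\D')$ has chromatic number $O(1)$; more precisely, hypergraphs induced by discs are proper-$4$-colorable, and again hereditarily so, since any induced sub-hypergraph $H(\D')[\D'']$ is itself induced by the sub-family $\D'' \subseteq \D'$ of discs. Plugging $k = 4$ into Theorem~\ref{thm:lambdachum} gives $\chum(H(\D')) \le \log_{4/3} n + 1$, as claimed.

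The main obstacle, as flagged, is purely the hereditary $4$-colorability input: one must be careful that the constant $4$ (not just ``$O(1)$'') is available and that it survives restriction to arbitrary vertex subsets. For discs and pseudo-discs this is standard — the relevant planarity (Delaunay graph in case (i); the ``Delaunay-type'' planar graph on regions with linear union complexity in case (ii)) is inherited by subsets — so no new argument is needed beyond citing \cite{smoro}; the rest is the one-line substitution $\lambda = k/(k-1) = 4/3$ into Theorem~\ref{thm:lambdachum} together with the identity $\log_{4/3} n = \log_2 n / \log_2(4/3)$ and the estimate $1/\log_2(4/3) \approx 2.41$.
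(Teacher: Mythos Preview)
Your proposal is correct and matches the paper's proof essentially line for line: the paper's argument is the single sentence ``This follows from combining the fact that such hypergraphs are hereditary $4$-colorable \cite{ELRS,smoro} together with Theorem~\ref{thm:lambdachum},'' and you do exactly that, supplying $k=4$ so $\lambda=4/3$. Your additional justification of the hereditary property (via Delaunay planarity for~(i) and closure under sub-families for~(ii)) is more detailed than what the paper writes but is precisely the content behind the citations.
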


This follows from combining the fact that such hypergraphs are hereditary $4$-colorable \cite{ELRS,smoro} together with Theorem~\ref{thm:lambdachum}.

\begin{corollary}
  Let $H$ be a hypergraph of $n$ points in $\Re^2$ with respect to
  halfplanes.
  Then \[\chum(H) \leq \log_{3/2}n + 1 \approx 1.71 \log_2 n + 1.\]
\end{corollary}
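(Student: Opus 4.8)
The plan is to derive this as a direct corollary of Theorem~\ref{thm:lambdachum}, exactly in the style of the preceding two corollaries. The only thing to verify is the hereditary colorability parameter $k$ for the hypergraph induced by $n$ points with respect to halfplanes, and then to plug the corresponding $\lambda = k/(k-1)$ into Theorem~\ref{thm:lambdachum}.

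First I would recall that the hypergraph $H = H_{\mathcal{H}}(P)$ of $n$ points with respect to halfplanes is hereditarily $3$-colorable: for any subset $P' \subseteq P$, the hypergraph $H_{\mathcal{H}}(P')$ admits a proper $3$-coloring. This is a known fact (it follows, e.g., from the fact that the Delaunay-type graph associated with halfplanes, or the ``lower/upper envelope'' structure, yields a planar-like graph of bounded degeneracy; more directly, points with respect to halfplanes have a proper $3$-coloring because one can repeatedly $2$-color the ``extreme'' points on the convex hull — actually the sharp constant here is $3$, matching the known tight bound on the chromatic number for halfplane range spaces). I would cite the appropriate reference (the same body of work as \cite{ELRS,smoro}) for hereditary $3$-colorability. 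Since the statement we must prove only asks for an upper bound with constant $\log_{3/2} n$, it suffices that $k \le 3$ works hereditarily.

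Next I would set $k = 3$, so that $\lambda = \frac{k}{k-1} = \frac{3}{2}$, and apply Theorem~\ref{thm:lambdachum}: if every list $L_v$ satisfies $\card{L_v} > \log_{3/2} n$, then $H$ admits a unique-maximum coloring from $\mathcal{L}$. Since list sizes are integers, it is enough to have $\card{L_v} \ge \lfloor \log_{3/2} n \rfloor + 1$, and certainly $\card{L_v} \ge \log_{3/2} n + 1$ suffices. By the definition of the um-choice number this gives $\chum(H) \le \log_{3/2} n + 1$, and the numerical remark $\log_{3/2} n = \frac{\ln n}{\ln(3/2)} \approx 1.71 \log_2 n$ is just a change of base, since $1/\log_2(3/2) \approx 1.7095$.

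The only genuine content — hence the main (mild) obstacle — is justifying the hereditary $3$-colorability of point sets with respect to halfplanes with the right constant $3$; everything else is bookkeeping identical to the two corollaries above. I would therefore phrase the proof as: ``This follows from combining the fact that such hypergraphs are hereditarily $3$-colorable~\cite{smoro} together with Theorem~\ref{thm:lambdachum},'' mirroring the one-line proofs already used for discs and for linear-union-complexity regions.
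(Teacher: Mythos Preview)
Your approach is essentially identical to the paper's: invoke hereditary $3$-colorability of point sets with respect to halfplanes and apply Theorem~\ref{thm:lambdachum} with $k=3$, $\lambda=3/2$. The only discrepancies are bibliographic and a small caveat: the paper cites \cite{BCOScpc2010,Keszegh07} (not \cite{smoro}) for the $3$-colorability, and it explicitly flags the exceptional configuration of four points with exactly three on the convex hull, where the induced hypergraph is \emph{not} $3$-colorable --- a subtlety your write-up omits but should acknowledge.
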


This is a consequence of the hypergraph being hereditarily 
3-colorable, except
when there are 4 points of which only 3 are at the convex hull
(see, e.g., \cite{BCOScpc2010,Keszegh07}).

\begin{corollary}
  Let $H$ be a hypergraph of $n$ points in $\Re$ with respect to
  intervals.
  Then \[\chum(H) \leq \log_{2}n + 1 .\]
\end{corollary}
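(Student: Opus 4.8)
The plan is to apply Theorem~\ref{thm:lambdachum} with $k = 2$ (so that $\lambda = \frac{2}{2-1} = 2$), exactly in the spirit of the three preceding corollaries. The only hypothesis that needs checking is that the hypergraph $H$ of $n$ points on the line with respect to intervals is hereditarily $2$-colorable. Granting this, Theorem~\ref{thm:lambdachum} says that $H$ admits a unique-maximum coloring from any list family $\L = \{L_v\}_{v \in V}$ with $\card{L_v} > \log_2 n$ for all $v$. Since $\floor{\log_2 n} + 1 > \log_2 n$, lists of size $\floor{\log_2 n} + 1$ already suffice, and therefore $\chum(H) \leq \floor{\log_2 n} + 1 \leq \log_2 n + 1$.

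To verify hereditary $2$-colorability, first note that for any subset $V'$ of the vertex set, the induced sub-hypergraph $H[V']$ is again (isomorphic to) a hypergraph of points on the line with respect to intervals: intersecting each interval with $V'$ produces precisely the traces $S \cap V'$, and these traces are exactly the intervals of the induced point set. Hence it is enough to exhibit a proper $2$-coloring of an arbitrary interval hypergraph on $m$ points. Order the points as $p_1 < p_2 < \cdots < p_m$ and color $p_i$ by the parity $i \bmod 2$. Every hyperedge of such a hypergraph is a block of consecutive points $\{p_i, p_{i+1}, \dots, p_j\}$, because an interval on the line cuts off exactly a contiguous block. Consequently any hyperedge with at least two vertices contains a pair $p_i, p_{i+1}$ of consecutive points, which receive opposite parities; the coloring is therefore proper. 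This establishes the hereditary $2$-colorability, and combined with the first paragraph gives $\chum(H) \leq \log_2 n + 1$.

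I do not anticipate a real obstacle here; the proof is essentially a bookkeeping exercise of plugging the correct value of $k$ into Theorem~\ref{thm:lambdachum}. The only point requiring a moment's care is the \emph{hereditary} part of the colorability hypothesis, but this is immediate once one observes that the class of interval hypergraphs is closed under passing to induced sub-hypergraphs. (One could also simply invoke the earlier remark in this section that the discrete interval hypergraph $H_n$ is hereditarily $2$-colorable, since every $n$-point interval hypergraph on the line is isomorphic to $H_n$.)
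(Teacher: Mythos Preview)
Your proposal is correct and follows essentially the same approach as the paper: apply Theorem~\ref{thm:lambdachum} with $k=2$ (hence $\lambda=2$) after observing that the interval hypergraph is hereditarily $2$-colorable. The paper's own justification is even terser---it simply remarks that $H$ is isomorphic to $H_n$, which was noted earlier to be hereditarily $2$-colorable---whereas you spell out the parity coloring explicitly, but the substance is identical.
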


The above hypergraph is isomorphic to the discrete interval 
hypergraph $H_n$, which is hereditarily 
2-colorable, as we have mentioned before.

%
%

\section{List cf-coloring of planar graphs with respect to paths}
\label{sec:separator}

Given a simple graph $G=(V,E)$, consider the hypergraph
\[
\HGp{G} =
 (V,
  \{S \mid \text{$S$ is the vertex set of a simple path in $G$}\}
 ).
\]
A cf (respectively, um) coloring of $\HGp{G}$ is called
a cf (respectively, um) coloring of $G$ \emph{with respect to paths}.
Unique-maximum
coloring of a graph $G$ with respect to paths is known in the literature as \emph{vertex ranking} or \emph{ordered coloring}. See e.g., \cite{DKKM94,orderedcoloring}.

\begin{theorem}\label{thm:planargraphschcf}
Let $G$ be a planar graph with $n$ vertices.
Then $\cfch(\HGp{G}) = O(\sqrt{n})$.
\end{theorem}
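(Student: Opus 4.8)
The plan is to exploit the planar separator theorem together with a divide-and-conquer recursion, much in the spirit of classical separator-based algorithms. Let me think about how this works.

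We have a planar graph $G$ on $n$ vertices, and we want to cf-color $\HGp{G}$ from lists of size $O(\sqrt{n})$. The Lipton–Tarjan separator theorem gives a set $X$ of $O(\sqrt{n})$ vertices whose removal splits $G$ into components $A$ and $B$, each of size at most $2n/3$. The key observation is: any simple path in $G$ either lies entirely within $G[A \cup X]$ or entirely within $G[B \cup X]$... no wait, that's false — a path can cross the separator multiple times. Let me reconsider.

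Actually the right idea: any simple path $P$ in $G$, when restricted to $V \setminus X$, decomposes into sub-paths lying in $A$ or in $B$. But more usefully, think about it this way. We will color $X$ with "fresh" distinct colors — but we only have lists of size $O(\sqrt{n})$, so we cannot assume the lists of vertices in $X$ are disjoint or contain dedicated colors. This is where list coloring is genuinely harder than ordinary coloring. Instead, the approach should be: give each vertex of $X$ its own color drawn from its list, but we need these $|X| = O(\sqrt n)$ colors to be distinct from each other AND distinct from everything used in the recursion. Since $|X| = O(\sqrt n)$ and lists have size, say, $c\sqrt n$ for a large enough constant $c$, we can greedily pick distinct colors for vertices of $X$ (each choice forbids at most $|X|-1$ others, and $|X| - 1 < c\sqrt n$). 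Then delete these $|X|$ chosen colors from every list in the rest of the graph; the remaining lists still have size $\geq c\sqrt n - |X| = \Omega(\sqrt n)$. Recurse on $G[A]$ and $G[B]$ independently with these reduced lists.

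Now I must check the conflict-free property for an arbitrary simple path $S$ in $G$. If $S \cap X = \emptyset$, then $S$ is a path entirely in $A$ or entirely in $B$ (since with no separator vertices it lies in a single component), and it's handled by the recursive coloring, which is conflict-free on that subgraph — here I'd need that $\HGp{G[A]}$'s hyperedges include all simple paths of $G$ contained in $A$, which holds since a simple path of $G$ inside $A$ is a simple path of $G[A]$. If $|S \cap X| = 1$, say $S \cap X = \{x\}$, then $x$ has a color not used by any other vertex in the whole graph (its color was deleted from all other lists), so $x$ is the conflict-free witness in $S$. The genuine obstacle is the case $|S \cap X| \geq 2$: then no separator vertex is automatically a unique-color witness, and the path may weave between $A$ and $B$, so neither recursive coloring "sees" all of $S$. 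To handle this, the recursion depth must be controlled: after $O(\log n)$ levels the components have constant size and can be colored trivially, and the total number of distinct colors removed along any root-to-leaf branch is $\sum O(\sqrt{n (2/3)^i}) = O(\sqrt n)$ — a geometric series — so the list-size budget works out to $O(\sqrt n)$ overall, giving the bound.

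The hard part will be handling paths with $|S\cap X| \geq 2$. The fix I expect the authors use: don't just pick arbitrary fresh colors for $X$, but give the separator vertices at each recursion level colors from a \emph{strictly higher range} than anything used deeper in the recursion — impossible directly with fixed small lists, so instead one argues structurally. The cleanest route is: observe that for the cf-property it suffices that for each path $S$, \emph{some} vertex is uniquely colored; partition $S$ at its separator vertices into maximal sub-paths avoiding $X$; each such sub-path lies in a single side and is correctly cf-colored recursively — but a uniquely-colored vertex within a sub-path might be repeated elsewhere on $S$. So one instead needs the separator colors to sit "on top". I would therefore structure it as: reserve, conceptually, that the recursion returns a coloring together with a guarantee, and prove by induction that the coloring produced is in fact unique-maximum-like with respect to a suitable ordering... but the theorem only claims cf, and indeed the excerpt explicitly says um is impossible here, so the intended proof must be subtler — likely coloring $X$ with the \emph{largest} available colors in each list and arguing that because recursion uses smaller colors, the maximum color on any path $S$ with $S \cap X \neq \emptyset$ occurs at a separator vertex, and distinctness of the separator colors makes it unique; when $S \cap X = \emptyset$ we fall back to the recursive cf-coloring. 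Verifying that this "use the top of each list" choice is consistent across recursion levels while keeping list sizes $\Omega(\sqrt n)$ is the crux of the argument, and is where I would spend the most care.
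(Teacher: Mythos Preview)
Your construction is exactly the paper's: find a planar separator $X$ of size $O(\sqrt{n})$, greedily assign the vertices of $X$ pairwise distinct colors from their lists, delete those colors from every remaining list, and recurse on the two sides; the geometric series $\sum_i \sqrt{6(2/3)^i n}$ bounds the total list consumption by $c\sqrt{n}$.

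Where you go wrong is in the analysis, not the construction. The case $\card{S\cap X}\ge 2$ is \emph{not} an obstacle. Fix a level of the recursion, say working on $G[V']$, with separator $X\subseteq V'$. Every $x\in X$ receives a color that is (i) distinct from the colors of all other vertices of $X$ (you chose them greedily distinct) and (ii) distinct from the color of every vertex of $V'\setminus X$ (you deleted the $X$-colors from their lists before recursing, so those colors are unavailable throughout the entire subtree of the recursion). Hence each separator vertex has a color that is unique in all of $V'$. Consequently, for \emph{any} simple path $S$ in $G[V']$ that meets $X$, every vertex of $S\cap X$ serves as a conflict-free witness --- regardless of whether $\card{S\cap X}$ is $1$ or larger. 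If $S$ avoids $X$, then $S$ lies in one of the two sides and is handled by the recursive cf-coloring. That is the whole correctness argument; the paper's own proof regards it as routine and essentially omits it. Your proposed fixes (choosing ``largest available'' colors, aiming for a unique-maximum-style guarantee) are unnecessary and, as you yourself note via the star-graph remark after the theorem, could not work anyway.
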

\begin{proof}
The proof is constructive. Given a
planar graph $G$ on $n$ vertices together with a
family $\L = \{L_v\}_{v \in V}$ of sets of
size $c\sqrt{n}$ where $c$ is some absolute constant to be
revealed later, we produce a cf-coloring $C$ of $G$ with
respect to paths with colors from $\L$.

The algorithm is recursive.
By the
Lipton-Tarjan separator theorem \cite{LT} and in particular by the
version of the separator theorem from \cite{Djidjev1982},
there exists a
partition of the vertex set
$V=R\cup B \cup S$ such that $\max(\card{R},\card{B})
\leq 2n/3$ and $\card{S} \leq \sqrt{6n}$ and such that
there is no edge connecting a vertex in $R$ with a vertex in $B$.
Moreover, this partition can be computed efficiently.

We color all vertices in
$S$ with distinct colors. This can be done greedily as follows:
Arbitrarily order the vertices in $S$ and for each vertex $v$ in
this order choose a color from $L_v$ to assign to $v$ which is
distinct from all colors assigned to previous vertices in $S$. This is
possible if $\card{L_v} = c\sqrt{n} \geq  \sqrt{6n} \geq
\card{S}$. Next, for each vertex $u \in R \cup B$ modify the lists
$\{L_u\}_{u \in R \cup B}$ by erasing all colors used for $S$,
namely put $\L' = \{\ L_u \setminus \{C(v) \mid v \in S \} \}_{u
\in R \cup B}$. We recursively color $G[B]$ and $G[R]$ from $\L'$. Note
that the colors assigned to vertices in $R \cup B$ are distinct
from all colors used for $S$. Note also that if this coloring is
indeed a valid cf-coloring of $G$ from $\L$ then the function
$f(n)$ defined to be the maximum cf-choice number for a planar graph on
$n$ vertices satisfies the following recursive inequality:
$$
f(n) \leq \sqrt{{6}}\sqrt{\smash[b]{n}} + f(2n/3) \leq
\sum_{i=0}^{\infty} \sqrt{6\cdot\left(\frac{2}{3}\right)^i n} =
\frac{\sqrt{6}\sqrt{n}}{1-\sqrt{2/3}} \approx
13.3485\sqrt{n}
$$
Thus, we have $f(n) \leq c\sqrt{n}$, for $c \approx 13.3485$,
as claimed.
\end{proof}

\begin{remark}
The upper bound $O(\sqrt{n})$ is asymptotically tight, since for
the $\sqrt{n}\times\sqrt{n}$ grid graph $G_{\sqrt{n}}$, it was
proved in \cite{ChT2010ciac} that $\cf(\HGp{G_{\sqrt{n}}}) =
\Omega(\sqrt{n})$ and thus, from inequality~\eqref{eq:choicechi},
also $\cfch(\HGp{G_{\sqrt{n}}}) = \Omega(\sqrt{n})$.
\end{remark}

It is easily seen that an analog of Theorem~\ref{thm:planargraphschcf} 
for $\chum$ does not hold.
For example, consider the \emph{star graph} on $n > 2$
vertices $K_{1,n-1}$.
It is easy to check that
$\chicf(\HGp{K_{1,n-1}}) = \chcf(\HGp{K_{1,n-1}}) = \chium(\HGp{K_{1,n-1}}) = 2$.
However, consider a family $\Lfam$ of lists as follows:
Associate the $n-1$ leaves of the star with the same list of colors
and associate the other vertex with a list of colors which are all lower
than the colors appearing in the lists of the leaves.
In a unique-maximum coloring with respect to paths from $\Lfam$,
no two leaves can get the same color, because then we do not have the unique maximum
property for the path that connects these two leaves and hence,
$\chum(\HGp{K_{1,n-1}}) \geq n-1$.

\section{List um-coloring hypergraphs with few edges}%
\label{sec:upperboundfewedges}

In this section we extend upper bounds on $\chicf$ from
\cite{CheilarisCUNYthesis2009,CFPT09},
making them hold also for $\chum$.
In other words, we extend the results in two ways making
them hold for choice instead of chromatic number and
for unique-maximum colorings instead of conflict-free colorings.
Moreover, we provide a more concise proof.
In order to state the results, we need the following definition.

\begin{definition}
For every hypergraph $H$, define $s(H)$ to be the minimum positive
integer $s$ such that $\card{E(H)} \leq s(s-1)/2$.
\end{definition}

\begin{fact}\label{fact:diffedges}
For two hypergraphs $H$ and $H'$,
if $\card{\E(H)} \geq \card{\E(H')}$ and $s(H)=s(H') > 1$, then
$$\card{\E(H)} - \card{\E(H')} < s(H) - 1.$$
\end{fact}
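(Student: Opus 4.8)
The plan is to set $s := s(H) = s(H')$ and simply bracket the two edge counts between consecutive ``triangular'' thresholds. Write $m := \card{\E(H)}$ and $m' := \card{\E(H')}$, so that $m \geq m'$ by hypothesis. By the very definition of $s(\cdot)$ we immediately get the upper bounds $m \leq s(s-1)/2$ and $m' \leq s(s-1)/2$.

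Next I would extract matching strict lower bounds from the \emph{minimality} in the definition of $s(\cdot)$. Since $s > 1$, the integer $s-1$ is positive, hence it is a legitimate candidate in the definition; as $s = s(H)$ is the \emph{minimum} such integer, $s - 1$ must fail, i.e.\ $m > (s-1)(s-2)/2$. The same argument applied to $H'$ gives $m' > (s-1)(s-2)/2$. Thus both $m$ and $m'$ lie in the half-open interval $\bigl((s-1)(s-2)/2,\ s(s-1)/2\bigr]$.

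Finally, subtract: since $m' > (s-1)(s-2)/2$ and $m \leq s(s-1)/2$,
\[
m - m' < \frac{s(s-1)}{2} - \frac{(s-1)(s-2)}{2} = \frac{(s-1)\bigl(s - (s-2)\bigr)}{2} = s - 1,
\]
which is exactly $\card{\E(H)} - \card{\E(H')} < s(H) - 1$. The argument is essentially a one-line computation; the only point requiring any care — and the ``main obstacle'', such as it is — is the use of the hypothesis $s(H) > 1$ to guarantee that $s-1$ is a positive integer, so that minimality actually yields the strict lower bound $m > (s-1)(s-2)/2$ rather than a vacuous statement.
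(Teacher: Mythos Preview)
Your proof is correct and follows essentially the same approach as the paper: both arguments sandwich $\card{\E(H')} \leq \card{\E(H)}$ between $(s-1)(s-2)/2$ and $s(s-1)/2$ and subtract. The paper compresses this into a single chain of inequalities, while you spell out more explicitly how the hypothesis $s(H)>1$ is used to invoke minimality for the strict lower bound.
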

\begin{proof}
We have $(s(H)-1)(s(H)-2)/2 < \card{\E(H')} \leq \card{\E(H)} \leq s(H)(s(H)-1)/2$,
which implies $\card{\E(H)} - \card{\E(H')} < s(H) - 1$.
\end{proof}

\begin{theorem}\label{thm:familydegs}
Let $H=(V,\E)$ be a hypergraph  and let $\Lfam = \{L_v\}_{v \in V}$ be a family of lists.
If for every $v \in V$  $\card{L_v} \geq \min(\deg_H(v) + 1,s(H))$,
then
$H$ admits a unique-maximum coloring from $\Lfam$.
\end{theorem}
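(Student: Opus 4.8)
The plan is to mimic the potential argument of Theorem~\ref{thm:um_lists}, but since $H$ need not be hereditarily $k$-colorable for a useful constant $k$, I would instead run a version of Algorithm~\ref{algo:umchoice_general} in which at each iteration the independent set $U$ chosen from $H[V^c]$ is simply a single vertex — the uncolored vertex $v \in V^c$ whose remaining list is largest (equivalently, whose ``potential contribution'' is largest). A single vertex is trivially independent, so the correctness lemma (the one preceding Theorem~\ref{thm:um_lists}) applies verbatim once we show lists never run dry. So the whole content is a potential/counting bookkeeping argument bounding how many colors get deleted from any one vertex's list.

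First I would set up the right potential. Let $s = s(H)$, so $|\E(H)| \le s(s-1)/2$. The natural quantity to track is, at the start of iteration $t$, $\Phi_t := \sum_{v \in V_t} (\text{something decreasing in } r_t(v))$, where $r_t(v)$ is the number of colors left in $L_v$; but a cleaner route for this ``few edges / low degree'' regime is a direct deletion-count argument rather than a geometric potential. The key observation: when color $c$ is processed, we color exactly one vertex $u$ (the chosen one) and delete $c$ only from the other vertices of $V^c$, i.e.\ from $|V^c| - 1$ vertices. Each such deletion can be ``charged'' to a hyperedge, because two vertices both containing $c$ in their lists that are not colored together and later get different treatment witness structure in $H[V^c]$; more usefully, I would charge each deletion at vertex $v$ in this iteration to the pair $\{u, v\}$, and argue this pair lies in (or is dominated by) a hyperedge or an incidence that is not charged again. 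Concretely: if we always color the vertex of maximum remaining list size, then a vertex $v$ loses a color only in iterations where some vertex $u$ with $r(u) \ge r(v)$ is colored while $v$ is not; bounding the number of such events by either $\deg_H(v)$ (each such event is witnessed by a distinct hyperedge through $v$, giving the $\deg_H(v)+1$ bound) or by counting globally against $|\E(H)| \le s(s-1)/2$ (using Fact~\ref{fact:diffedges} to handle the off-by-one, giving the $s(H)$ bound) yields that $v$ is colored before its list of size $\ge \min(\deg_H(v)+1, s(H))$ is exhausted.

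The main obstacle — and where I expect the real work to be — is making the charging scheme injective in the $s(H)$ branch: showing that the total number of color-deletions over the whole run is at most $\binom{s}{2} - 1$ (or something that, combined with Fact~\ref{fact:diffedges}, forces every vertex with a list of size $\ge s$ to survive), and simultaneously that in the $\deg_H(v)+1$ branch the deletions at a fixed $v$ inject into the hyperedges incident to $v$. The delicate point is that a single hyperedge $S$ should not be ``used'' to pay for two different deletions, which I would handle by ordering: when $v$ loses color $c$ because $u$ was colored, associate to this the hyperedge of $H[V^c]$ containing both $u$ and $v$ that is ``responsible'' for $u,v$ not being merged, and check that because $u$ disappears from $V$ immediately after, that hyperedge's restriction shrinks and cannot be reused with the same $u$; a global double-counting then packages these into the $s(H)$ bound. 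Once the injectivity is pinned down, the conclusion that no list runs out is immediate, and the correctness lemma finishes the proof that the resulting $f$ is a unique-maximum coloring from $\Lfam$.
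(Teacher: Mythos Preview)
Your approach has a genuine gap that cannot be repaired without changing the algorithm. In Algorithm~\ref{algo:umchoice_general}, the color $c$ is removed from \emph{every} vertex of $V^c\setminus U$, and membership in $V^c$ is determined solely by whether $c$ lies in the vertex's list --- it has nothing to do with the hyperedge structure. So when your chosen vertex $u$ is colored $c$ and $v\in V^c\setminus\{u\}$ loses the color $c$, there is in general no hyperedge containing both $u$ and $v$; your charging ``to a hyperedge through $v$'' simply has nothing to charge to. Concretely: take $V=\{a,b,w\}$, $\E=\{\{a,b\}\}$, so $\deg(a)=\deg(b)=1$, $\deg(w)=0$, $s(H)=2$; the lists $L_a=L_b=\{1,2\}$, $L_w=\{1\}$ satisfy the hypothesis. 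Your rule processes color~$1$, selects $a$ or $b$ (largest remaining list), and then deletes $1$ from $L_w$, emptying it. The problem is not the injectivity of the charging; it is that the deletion at $w$ is not witnessed by any hyperedge at all.

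The paper's proof avoids this by departing from Algorithm~\ref{algo:umchoice_general} in three essential ways: it processes colors from the \emph{maximum} down; at each step it colors a single vertex $v$ (chosen to have maximum \emph{degree}, not maximum remaining list) and deletes the hyperedges through $v$; and it removes $c$ only from vertices lying in $\bigcup_{S\in\E_v}S$, i.e.\ only from hyperedge-neighbors of $v$. Processing top-down is what makes this restricted deletion sound for the unique-maximum property (the colored vertex carries the global maximum, so it is automatically the unique maximum in every deleted hyperedge), and the restricted deletion is exactly what makes the degree-based bookkeeping go through. The choice of maximum degree, rather than maximum list size, is what lets Fact~\ref{fact:diffedges} handle the case $s(H')=s(H)$: it forces $\deg_H(v)+1<s(H)$ for the colored vertex and hence for every vertex that actually loses a color. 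If you want to salvage a bottom-up argument in the style of Algorithm~\ref{algo:umchoice_general}, you would first have to modify the deletion rule and then redo the correctness lemma; as written, the proposal does not go through.
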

\begin{proof}
Notice that if $s(H)=1$, the hypergraph has no hyperedge and
thus if $\card{L_v} \geq 1$ for every $v \in V$,
then $H$ admits a unique-maximum coloring from $\Lfam$.
The proof is by induction on $\card{V}$. If $H$ has one vertex $v$,
then $\deg_H(v)=0$ and $s(H)=1$. Hence, if $\card{L_v} \geq 1$,
then $H$ admits a unique-maximum coloring from $\Lfam$.

If $\card{V} > 1$ and $s(H)>1$, 
consider the maximum color occurring in the union of all
lists, that is, $c = \max \bigcup_{v \in V} L_v$.
Among these vertices which have $c$ in their list,
choose the vertex $v$ with maximum degree in the hypergraph.
Consider the subset of hyperedges $\E_v \subseteq \E$ that contain $v$.
Put 
$H' = (V' , \E'$),
where
$V' = V \setminus \{v\}$,
$\E' = \E \setminus \E_v$,
and define
$\Lfam' = \{L'_{u}\}_{u \in V'}$ such that
\[
L'_{u} =
  \begin{cases}
     L_{u} \setminus \{c\}  &  \text{if $u \in \bigcup_{S \in \E_v}S$,}   \\
     L_{u}              &  \text{if $u \notin \bigcup_{S \in \E_v}S$.}
  \end{cases}
\]

In order to apply the inductive hypothesis on $H'$,
we prove that for every $u \in V'$,
\begin{equation}\label{eq:condition}
\card{L'_u} \geq  \min ( \deg_{H'}(u)+1, s(H') ) ,
\end{equation}

If $\card{L_u} = \card{L'_u}$, that is, when $u \notin \bigcup_{S \in \E_v}S$ or
$c \notin L_u$, then condition~\eqref{eq:condition} holds.
Also, if $c \in L_u$, $u \in \bigcup_{S \in \E_v}S$, and $s(H') < s(H)$, then
condition~\eqref{eq:condition} holds, since
$\deg_{H'}(u) < \deg_H(u)$.
If $c \in L_u$, $u \in \bigcup_{S \in \E_v}S$, and $s(H')=s(H)$, then
Fact~\ref{fact:diffedges}
implies
\[s(H) > \card{\E} - \card{\E'} + 1 = \deg_{H}(v)+1
  \geq \deg_{H}(u)+1 . \]
This follows from the fact that
$\deg_{H}(v)=\card{\E} - \card{\E'}$ and
$\deg_{H}(v) \geq \deg_{H}(u)$.
Since $s(H')=s(H)$ and $\deg_{H}(u) \geq \deg_{H'}(u)+1$,
we also have
\[s(H') > \deg_{H'}(u)+1.\]
As a result,
\begin{align*}
  \card{L'_u} & = \card{L_u}-1 \geq
  \min(\deg_{H}(u)+1,s(H))-1 =
  \deg_{H}(u)+1 - 1  \\
  & =
  \deg_{H'}(u)+1 =
  \min(\deg_{H'}(u)+1,s(H')).
\end{align*}

Finally,
by the inductive hypothesis, $H'$ admits a um-coloring from
$\Lfam'$. Extend this coloring by coloring $v$ with $c$ to get a
um-coloring of $H$.
\end{proof}

\begin{corollary}
For every hypergraph $H$, $\chum(H) \leq \Delta(H) + 1$.
\end{corollary}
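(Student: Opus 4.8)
The plan is to derive the corollary $\chum(H) \leq \Delta(H) + 1$ directly from Theorem~\ref{thm:familydegs} by choosing an appropriate family of lists. Recall that $\chum(H)$ is the um-choice number, i.e., the smallest $k$ such that $H$ admits a um-coloring from \emph{every} list family $\Lfam = \{L_v\}_{v \in V}$ with $\card{L_v} \geq k$ for all $v$. So I would fix an arbitrary such family with $k = \Delta(H) + 1$ and verify that it satisfies the hypothesis of Theorem~\ref{thm:familydegs}, namely that $\card{L_v} \geq \min(\deg_H(v) + 1, s(H))$ for every $v \in V$.

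The verification is immediate: for each vertex $v$, we have $\deg_H(v) \leq \Delta(H)$ by definition of the maximum degree, hence $\deg_H(v) + 1 \leq \Delta(H) + 1 = k \leq \card{L_v}$. Therefore $\card{L_v} \geq \deg_H(v) + 1 \geq \min(\deg_H(v)+1, s(H))$, and the hypothesis of Theorem~\ref{thm:familydegs} holds. Applying the theorem, $H$ admits a unique-maximum coloring from $\Lfam$. Since $\Lfam$ was an arbitrary list family with all lists of size at least $\Delta(H)+1$, this shows $H$ is $(\Delta(H)+1)$-um-choosable, i.e., $\chum(H) \leq \Delta(H) + 1$, as claimed.

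There is essentially no obstacle here — the corollary is a one-line consequence obtained by discarding the $s(H)$ term in the minimum, which is the whole point of phrasing Theorem~\ref{thm:familydegs} with a minimum of two quantities. The only mild subtlety worth a sentence is making sure the reader recalls that $\chum$ denotes the um-choice number (so that we must quantify over all list families rather than exhibit a single coloring); once that is said, the proof is a direct substitution. I would write it in two or three sentences and not belabor it.
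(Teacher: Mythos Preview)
Your proposal is correct and matches the paper's intent: the corollary is stated without proof precisely because it follows from Theorem~\ref{thm:familydegs} by the one-line observation $\card{L_v} \geq \Delta(H)+1 \geq \deg_H(v)+1 \geq \min(\deg_H(v)+1, s(H))$. There is nothing to add.
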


\begin{corollary}
For every hypergraph $H$, $\chum(H) \leq s(H)$.
\end{corollary}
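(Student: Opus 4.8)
The plan is to derive this directly from Theorem~\ref{thm:familydegs} by specializing the list-size hypothesis. Recall that $\chum(H) \leq s(H)$ means, by definition of the um-choice number, that $H$ is $s(H)$-um-choosable: for \emph{every} family $\Lfam = \{L_v\}_{v \in V}$ of lists with $\card{L_v} \geq s(H)$ for all $v \in V$, the hypergraph $H$ admits a unique-maximum coloring from $\Lfam$.

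So I would fix an arbitrary such family $\Lfam$ with $\card{L_v} \geq s(H)$ for every $v$. The only thing to check is that this family satisfies the hypothesis of Theorem~\ref{thm:familydegs}, namely $\card{L_v} \geq \min(\deg_H(v)+1, s(H))$ for every $v \in V$. This is immediate: $\min(\deg_H(v)+1, s(H)) \leq s(H) \leq \card{L_v}$. Hence Theorem~\ref{thm:familydegs} applies and yields a unique-maximum coloring of $H$ from $\Lfam$.

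Since $\Lfam$ was an arbitrary family of lists each of size at least $s(H)$, we conclude that $H$ is $s(H)$-um-choosable, i.e., $\chum(H) \leq s(H)$, as claimed. There is no real obstacle here; the statement is a one-line specialization of Theorem~\ref{thm:familydegs} (exactly parallel to how the preceding corollary $\chum(H) \leq \Delta(H)+1$ follows by using $\card{L_v} \geq \Delta(H)+1 \geq \deg_H(v)+1 \geq \min(\deg_H(v)+1, s(H))$), and the only point worth stating explicitly is the reduction from ``$\chum(H) \leq k$'' to ``every list assignment with all lists of size $\geq k$ admits a um-coloring.''
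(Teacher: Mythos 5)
Your proposal is correct and matches the paper's intent exactly: the corollary is stated as an immediate specialization of Theorem~\ref{thm:familydegs}, using that $\min(\deg_H(v)+1, s(H)) \leq s(H) \leq \card{L_v}$ for every list assignment with lists of size at least $s(H)$. Nothing further is needed.
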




\section{A connection between choosability and colorability
in general hypergraphs}
\label{sec:chcol}

\begin{definition}
We call $C'$ a \emph{refinement} of a coloring $C$ if
$C(x) \neq C(y)$ implies $C'(x) \neq C'(y)$.
A class $\calC$ of colorings is said to have \emph{the refinement property}
if every refinement of a coloring in the class is also in the class.
\end{definition}

The class of conflict-free colorings and the class of proper colorings
are examples of classes which have the refinement property. On the
other hand, the class of unique-maximum colorings does not have this
property.

For a class $\calC$ of colorings we can define as usual the notions of chromatic
number $\chiC$ and choice number $\chC$. 
Then, we can prove the following theorem 
for classes with the refinement property.

\begin{theorem} \label{thm:general}
For every class of colorings $\calC$ that has the refinement property
and every hypergraph $H$ with $n$ vertices,
$\chC(H) \leq \chiC(H) \cdot \ln n +  1$.
\end{theorem}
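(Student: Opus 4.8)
The plan is to use a probabilistic argument in the spirit of Erd\H{o}s--Rubin--Taylor, fixing an optimal $\chiC(H)$-coloring $C$ of $H$ and showing that a random refinement of it, built from the prescribed lists, is a valid coloring in $\calC$ with positive probability. Concretely, set $q = \chiC(H)$ and $k = \lceil q \ln n + 1 \rceil$, and let $C_1, \dots, C_q$ be the color classes of an optimal coloring in $\calC$. Given a family $\Lfam = \{L_v\}_{v\in V}$ with $\card{L_v} \geq k$ for all $v$, I would partition (conceptually) the coloring problem class-by-class: the goal is to assign to the vertices of class $C_i$ colors from their lists in such a way that no two vertices in \emph{different} classes receive the same color, since then the resulting coloring $C'$ refines $C$ and hence lies in $\calC$ by the refinement property.

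The key step is the random assignment. For each class index $i \in \{1,\dots,q\}$, choose a uniformly random subset $A_i$ of size $\lfloor k/q \rfloor$ (or use the standard trick of picking, for each color independently, a uniformly random class index in $\{1,\dots,q\}$ and letting $A_i$ be the colors assigned index $i$) from a suitable common pool; the cleanest version is: pick for every vertex independently, but it is easier to argue if we first randomly split a ground set. I would instead follow the ERT template directly: for each vertex $v$ we want its final color to come from $L_v \cap (\text{colors earmarked for class } C(v))$, so I randomly color the "palette". Since $\card{L_v} \geq q\ln n + 1$, if each color in $L_v$ is independently assigned to class $C(v)$ with probability $1/q$, then the probability that \emph{no} color of $L_v$ is earmarked for $v$'s class is at most $(1 - 1/q)^{\card{L_v}} \leq e^{-\card{L_v}/q} \leq e^{-\ln n} = 1/n$. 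A union bound over the $n$ vertices then gives positive probability that every vertex has at least one admissible color in its own class's earmarked set; fix such an outcome and color each $v$ with any such color. Because colors earmarked for distinct classes are disjoint, vertices in distinct color classes of $C$ get distinct colors, so $C'$ is a refinement of $C$, hence $C' \in \calC$.

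The main obstacle — really the only subtlety — is arranging the earmarking so that the events "$L_v$ misses $v$'s class" are jointly controllable by a union bound while keeping the disjointness of class palettes; the naive "each vertex picks its own color" does not give a refinement, so the randomness must be on a global palette indexed by the union $\bigcup_v L_v$ (assigning each color a random class label), and one must check that $(1-1/q)^{k} < 1/n$ for $k > q\ln n$, i.e. $k \geq q\ln n + 1$ suffices since $(1-1/q)^{q} \leq 1/e$. This yields $\chC(H) \leq \lceil \chiC(H)\ln n + 1\rceil$; a small amount of care with floors/ceilings (or simply stating the bound as $\chiC(H)\ln n + 1$ and noting $\chC$ is an integer) gives exactly the claimed inequality. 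I would also remark that the argument only uses the refinement property at the very last line, which is exactly why it fails for unique-maximum colorings.
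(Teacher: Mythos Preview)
Your proposal is correct and follows essentially the same approach as the paper: fix an optimal $\calC$-coloring with classes $V_1,\dots,V_q$, assign each color in $\bigcup_v L_v$ a uniformly random class label in $\{1,\dots,q\}$, and use the bound $(1-1/q)^{\card{L_v}}\le e^{-\card{L_v}/q}<1/n$ together with a union bound to guarantee every vertex finds an admissible color in its own class's palette. The paper's proof is exactly this argument (your exploratory remarks about alternative random partitions are unnecessary once you settle on earmarking the global palette, which is precisely what the paper does), and your closing observation about the refinement property being used only at the end, and hence the failure for unique-maximum colorings, also matches the paper's commentary.
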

\begin{proof}
If $k = \chiC(H)$,  there is a $\calC$-coloring $C$ of $H$ with colors
$\{1, \dots, k\}$,
which induces a partition of $V$
into $k$ classes: $V_1 \cup V_2 \cup \dots \cup V_k$.
Consider a family $\L = \{ L_{v} \}_{v \in V}$, such that
for every $v$,  $\card{L_v} = k^* > k \cdot \ln n$.
We wish to find a family $\L' = \{L'_{v}\}_{v \in V}$ with the
following properties:
\begin{enumerate}
\item For every $v \in V$, $L'_v \subseteq L_v$.
\item For every $v \in V$, $L'_v \neq \emptyset$.
\item For every $i \neq j$, if $v \in V_i$ and $u \in V_j$, then
      $L'_v \cap L'_u = \emptyset$.
\end{enumerate}
Obviously, if such a family $\L'$ exists, then there exists a
$\calC$-coloring from $\L'$: For each $v \in V$, pick a color $x \in
L'_v$ and assign it to $v$.

We create the family $\L'$ randomly as follows: For
each element in $\cup \L$, assign it uniformly at random to one
of the $k$ classes of the partition $V_1 \cup \dots \cup V_k$.
For every vertex $v \in V$, say with $v \in V_i$, we create
$L'_v$, by keeping only elements of $L_v$ that were assigned
through the above random process to $v$'s class, $V_i$.

The family $\L'$ obviously has properties~1 and~3.
We will prove that with positive probability it also has property~2.

For a fixed $v$, the probability that
$L'_v = \emptyset$
is at most
\[
\left(1 - \frac{1}{k}\right)^{k^*} \leq e^{-k^*/k} < e^{-\ln n} =
\frac{1}{n}
\]
and therefore,
using the union bound,
the probability that
for at least one
vertex $v$, $L'_v = \emptyset$,
is at most
\[n \left(1 - \frac{1}{k}\right)^{k^*} <  1.\]
Thus, there is at least one family $\L'$
where property~2 also holds,
as claimed.
\end{proof}

\begin{corollary}
For every hypergraph $H$,
$\chcf(H) \leq \chicf(H) \cdot \ln n +  1$.
\end{corollary}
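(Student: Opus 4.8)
The plan is to obtain this statement as an immediate instance of Theorem~\ref{thm:general}. That theorem already delivers the bound $\chC(H) \le \chiC(H)\cdot \ln n + 1$ for \emph{any} class $\calC$ of colorings possessing the refinement property, so the only thing I would need to supply is that the class of conflict-free colorings is such a class. Although the excerpt asserts this in passing, I would include the short verification: suppose $C$ is a conflict-free coloring of $H$ and $C'$ is a refinement of $C$. Fix a hyperedge $S \in \E$ and pick $v \in S$ with $C(v)$ unique in $S$. If we had $C'(v) = C'(w)$ for some $w \in S \setminus \{v\}$, then the contrapositive of the defining implication of a refinement ($C(x) \neq C(y) \Rightarrow C'(x) \neq C'(y)$) would force $C(v) = C(w)$, contradicting the uniqueness of $C(v)$ in $S$. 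Hence $C'(v)$ is still unique in $S$, so $C'$ is conflict-free; that is, the class is closed under refinements.

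With that in hand, the corollary is a matter of unwinding definitions: taking $\calC$ to be the class of conflict-free colorings, the class chromatic number $\chiC(H)$ coincides with $\chicf(H)$ and the class choice number $\chC(H)$ coincides with $\chcf(H)$. Substituting into the conclusion of Theorem~\ref{thm:general} yields exactly $\chcf(H) \le \chicf(H)\cdot \ln n + 1$.

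I expect no genuine obstacle here, since all the combinatorial work is already packaged into Theorem~\ref{thm:general} (whose proof randomly distributes the pooled colors $\bigcup_{v} L_v$ among the $\chicf(H)$ classes of a fixed conflict-free coloring and uses a union bound to show that, with positive probability, no vertex's sublist is emptied, so that property~2 holds and a conflict-free coloring from the refined lists exists). The sole point requiring any thought is the refinement-property check above, and that is a one-line contrapositive argument.
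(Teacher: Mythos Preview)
Your proposal is correct and matches the paper's approach exactly: the corollary is stated without proof in the paper, as an immediate specialization of Theorem~\ref{thm:general} to the class of conflict-free colorings, which the paper has already asserted to have the refinement property. Your inclusion of the short refinement-property verification is a harmless (and arguably welcome) addition, but no new idea is needed.
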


\begin{corollary}
For every hypergraph $H$,
$\ch(H) \leq \chi(H) \cdot \ln n +  1$.
\end{corollary}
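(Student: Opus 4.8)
The plan is to derive this as an immediate consequence of Theorem~\ref{thm:general}, applied to the class of proper colorings. The only thing that needs checking is that proper colorings constitute a class with the refinement property, which is asserted in the text just before Theorem~\ref{thm:general}; I would include the one-line verification for completeness.

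First I would recall the definition: $C'$ refines $C$ when $C(x)\neq C(y)$ implies $C'(x)\neq C'(y)$. Suppose $C$ is a proper coloring of $H=(V,\E)$ and $C'$ is any refinement of $C$. Take any hyperedge $S\in\E$ with $\card{S}\geq 2$. Since $C$ is proper, there exist $u,v\in S$ with $C(u)\neq C(v)$; by the refinement property $C'(u)\neq C'(v)$, so $S$ is non-monochromatic under $C'$ as well. Hence $C'$ is proper, and the class of proper colorings has the refinement property.

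Now I would simply instantiate Theorem~\ref{thm:general} with $\calC$ equal to the class of proper colorings. For this class, $\chiC(H)$ is by definition the proper chromatic number $\chi(H)$ and $\chC(H)$ is the proper choice number $\ch(H)$. The theorem therefore yields $\ch(H)\leq \chi(H)\cdot\ln n+1$, which is exactly the claim.

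There is essentially no obstacle here: the entire content lives in Theorem~\ref{thm:general}, and the corollary is a pure instantiation once the (trivial) refinement-closure of proper colorings is noted. The only point worth a moment's care is making sure the identification of $\chiC$ with $\chi$ and $\chC$ with $\ch$ is spelled out, so that the reader sees the corollary is not circular.
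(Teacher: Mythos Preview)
Your proposal is correct and matches the paper's approach exactly: the corollary is stated without proof as an immediate instantiation of Theorem~\ref{thm:general}, relying on the observation (made in the text preceding the theorem) that proper colorings have the refinement property. Your added one-line verification of refinement-closure is a harmless elaboration of what the paper leaves implicit.
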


The argument in the proof of Theorem~\ref{thm:general}
is a generalization of an argument
first given in \cite{ERT1979}, proving that any bipartite graph
with $n$ vertices is $O(\log n)$-choosable (see
also~\cite{Alon1992probchoice}).

We can not have an analog of Theorem~\ref{thm:general}
for unique maximum colorings.
Again, as in the end of Section~\ref{sec:separator},
the counterexample is the hypergraph with respect to paths of the
star graph, $\HGp{K_{1,n-1}}$,  for which
$\chium(\HGp{K_{1,n-1}}) = 2$, whereas
$\chum(\HGp{K_{1,n-1}}) \geq n-1$.

\section{Choice number of geometric hypergraphs}%
\label{sec:listnmgeom}

In this section, we provide near-optimal upper bounds on the choice
number of several geometric hypergraphs. We need the following
definitions:

\begin{definition}
Let $\R$ be a family of $n$ simple Jordan regions in the plane.
The {\em union complexity} of $\R$ is the number of vertices
(i.e., intersection of boundaries of pairs of regions in $\R$)
that lie on the boundary $\partial \bigcup_{r \in \R} r$.
\end{definition}

\begin{definition}
Let $H=(V,\E)$ be a hypergraph. Let $G=(V,E)$ be the graph whose
edges are all hyperedges of $\E$ with cardinality two. We refer
to $G$ as the {\em Delaunay graph} of $H$.
\end{definition}

\begin{theorem} \label{thm:choice}
(i) Let $H$ be a hypergraph induced by a finite set of points in
the plane with respect to discs. Then $\ch(H) \leq 5$.

(ii) Let $D$ be a finite family of discs in the plane. Then
$\ch(H(D)) \leq 5$.

(iii) Let $\R$ be a set of $n$ regions and let $\UComp: \mathbb N
\rightarrow \mathbb N$
    be a function such that $\UComp(m)$ is the maximum complexity of any $k$ regions in $\R$ over all $k \leq m$,
    for $1 \leq m \leq n$.
    We assume that $ \frac{\UComp(m)}{m}$ is a non-decreasing function.
Then, $\ch(H(\R)) = O(\frac{\UComp(n)}{n})$.
\end{theorem}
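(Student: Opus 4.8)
The claim to prove is Theorem~\ref{thm:choice}, which asserts upper bounds on the (proper) choice number of three families of geometric hypergraphs: points with respect to discs (part (i), bound $5$), discs with respect to points (part (ii), bound $5$), and regions with respect to points with union complexity function $\UComp$ (part (iii), bound $O(\UComp(n)/n)$). The unifying idea I would use is a \emph{recursive peeling} argument combined with the fact that the Delaunay graph of such a hypergraph is sparse, hence has a low-degree vertex, plus the observation that proper coloring of a hypergraph is equivalent to proper coloring of its Delaunay graph (a coloring is proper for $H$ iff no hyperedge of size $\geq 2$ is monochromatic, and it suffices — for hereditarily small chromatic number — to control the two-element hyperedges; but one must be careful: a monochromatic large hyperedge need not contain a monochromatic pair, so I actually need the stronger structural fact that these geometric hypergraphs are ``hereditarily'' controlled by their Delaunay graphs, which is exactly the content of the planar-Delaunay-graph results used in the $O(1)$-chromatic bounds of \cite{ELRS,smoro}).

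\textbf{Part (i) and (ii).} For $H$ a hypergraph induced by points with respect to discs, the Delaunay graph $G$ is planar (this is the classical Delaunay-triangulation fact, used in \cite{ELRS}), and moreover this remains true for every induced subhypergraph, so $G$ is \emph{hereditarily} planar. Planar graphs are $5$-degenerate — every subgraph has a vertex of degree $\leq 5$ — so I would argue by induction on $|V|$: pick a vertex $v$ whose degree in the Delaunay graph of the current hypergraph is at most $5$; by induction, $H[V\setminus\{v\}]$ admits a proper coloring from the restricted lists; then $v$ has at most $5$ Delaunay-neighbors already colored, and since $|L_v|\geq 5$ there is a color in $L_v$ avoiding all of them. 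The one subtlety I must dispatch is why avoiding the colors of Delaunay-neighbors suffices to avoid a monochromatic hyperedge through $v$: this is precisely where I invoke that in these geometric settings, the minimal monochromatic hyperedges are edges of the Delaunay graph (any disc-hyperedge of size $\geq 2$ contains a Delaunay edge — equivalently, two points that are ``consecutive'' and hence adjacent in the Delaunay triangulation of the subset cut off), so a coloring proper on $G$ is proper on $H$. The same argument works verbatim for (ii), since the hypergraph $H(D)$ induced by discs with respect to points also has a hereditarily planar Delaunay graph.

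\textbf{Part (iii).} Here the Delaunay graph is no longer planar, but its number of edges is governed by the union complexity: the Delaunay graph of $H(\R')$ for $|\R'|=m$ has $O(\UComp(m))$ edges (this is the standard ``union complexity bounds Delaunay-graph size'' estimate, e.g.\ from \cite{smoro}). Since $\UComp(m)/m$ is non-decreasing, every induced subhypergraph on $m \leq n$ vertices has average Delaunay-degree $O(\UComp(m)/m) = O(\UComp(n)/n)$, hence a vertex of degree $O(\UComp(n)/n)$; so the hypergraph is $O(\UComp(n)/n)$-degenerate with respect to its Delaunay graph, and the same peeling induction as above shows $\ch(H(\R)) = O(\UComp(n)/n)$, provided the lists have size exceeding this degeneracy bound. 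Again the only non-routine point is the reduction to the Delaunay graph, i.e.\ that in this setting too every hyperedge of size $\geq 2$ contains a Delaunay edge, which holds for hypergraphs induced by regions (a point covered by $\geq 2$ regions is covered by two whose boundaries are ``neighbors'' on the union boundary).

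\textbf{Main obstacle.} The genuine content — and the step I expect to require the most care — is the reduction from proper coloring of $H$ to proper coloring of the Delaunay graph $G$, i.e.\ establishing that these geometric hypergraphs have the property that a coloring is proper iff it is proper on $G$, \emph{hereditarily}. The degeneracy-plus-greedy-list-coloring step is then entirely routine (it is the standard proof that $d$-degenerate graphs are $(d+1)$-choosable). I would state the Delaunay-reduction as a lemma citing \cite{ELRS,smoro} for the planarity/edge-count facts, and then the three parts follow by plugging in the degeneracy bounds $5$, $5$, and $O(\UComp(n)/n)$ respectively.
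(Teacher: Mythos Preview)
Your overall architecture --- reduce proper coloring of $H$ to proper coloring of its Delaunay graph, then bound the choice number of that graph --- is exactly the paper's approach, and for part~(iii) your peeling/degeneracy argument is essentially identical to what the paper does (the paper states the low-degree-vertex fact as a lemma from \cite{smoro} and then runs the same induction you describe).

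However, there is a genuine gap in your argument for parts~(i) and~(ii). You write that planar graphs are $5$-degenerate and then claim: ``$v$ has at most $5$ Delaunay-neighbors already colored, and since $|L_v|\geq 5$ there is a color in $L_v$ avoiding all of them.'' This does not follow: if $v$ has exactly five neighbors using five distinct colors, and $L_v$ happens to be precisely that set of five colors, no admissible color remains. The greedy peeling argument you invoke shows only that $d$-degenerate graphs are $(d{+}1)$-choosable (as you yourself note at the end of the plan), so from $5$-degeneracy you get $\ch(H)\leq 6$, not $5$. The bound $5$ is \emph{not} obtainable by degeneracy alone; it requires Thomassen's theorem \cite{thomassen} that planar graphs are $5$-choosable, whose proof is a delicate strengthened induction (precoloring two adjacent outer-boundary vertices and allowing lists of size~$3$ on the rest of the outer boundary). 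The paper's proof of~(i) explicitly combines the three facts you identified --- every disc-hyperedge contains a Delaunay edge, the Delaunay graph is planar, and Thomassen's $5$-choosability of planar graphs --- rather than attempting a direct peeling. For~(ii) the paper similarly appeals to Thomassen after a lifting reduction from \cite{smoro} establishing planarity of the relevant Delaunay graph; your assertion that this Delaunay graph is planar is correct but is not as immediate as in case~(i), and in any event the same off-by-one issue would bite.
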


\begin{proof}
(i) Consider the Delaunay graph $G=G(P)$ on $P$, where two points
$p$ and $q$ form an edge in $G$ if and only if there exists a disc
$d$ such that $d\cap P = \{p,q\}$. That is, there exists a disc
$d$ that cuts off $p$ and $q$ from $P$. The proof of (i) follows
easily from the following known facts:
\begin{enumerate}
\item Every disc containing
at least two points of $P$ must also contain a Delaunay edge
$\{p,q\} \in E(G)$. (see, e.g., \cite{ELRS}).

\item $G$ is planar (see, e.g., \cite{CGbook}).

\item
 Every planar-graph is $5$-choosable \cite{thomassen}.
\end{enumerate}

(ii) The proof of the second part follows from a reduction to
three dimensions from \cite{smoro} and Thomassen's result
\cite{thomassen}.

(iii) For the third part of the theorem, we need the following
lemma from \cite{smoro}:
\begin{lemma} \cite{smoro}
    Let $\R$ be a set of $n$ regions and let $\UComp: \mathbb N \rightarrow \mathbb N$
    be a function such that $\UComp(m)$ is the maximum complexity of any $k$ regions in $\R$ over all $k \leq m$,
    for $1 \leq m \leq n$. Then, the Delaunay graph $G$ of the hypergraph $H=H(\R)$ has
    a vertex with degree at most $c\frac{\UComp(n)}{n}$ where $c$ is some absolute constant.
    \lemlab{silly}
\end{lemma}

The proof is similar to the proof of \cite{smoro} of the fact that
$\chi(H(\R)) = O(\frac{\UComp(n)}{n})$. We prove that $\ch(H(\R))
\leq c \cdot \frac{\UComp(n)}{n} +1$. Let $\L=\{L_r\}_{r \in \R}$
be the sets associated with the regions of $\R$. The proof is by
induction on $n$. Let $r\in \R$ be a region with at most $c \cdot
\frac{\UComp(n)}{n}$ neighbors in $G$. By the induction
hypothesis, the hypergraph $H(\R \setminus \{r\})$ is $c \cdot
\frac{\UComp(n-1)}{n-1}+1 \leq c \cdot \frac{\UComp(n)}{n}
+1$-choosable (by our monotonicity assumption on
$\frac{\UComp(n)}{n}$). We need to choose a color (out of the $c
\cdot \frac{\UComp(n)}{n}+1$ colors that are available for us in
the set $L_r$) for $r$ such that the coloring of $\R$ is valid.
Obviously, points that are not covered by $r$ are not affected by
the coloring of $r$. Note also that any point $p \in r$ that is
contained in at least two regions of $\R \setminus r$ is not
affected by the color of $r$ since, by induction, the set of
regions in $\R \setminus \{r\}$ containing such points is
non-monochromatic. We thus only need to color $r$ with a color
that is different from the colors of all regions $r' \in \R
\setminus r$, for which there is a point $p$ that is contained
only in $r\cap r'$. However, by our choice of $r$, there are at
most $c \cdot \frac{\UComp(n)}{n}$ such regions. Thus, we can
assign to $r$ a color among the $c \cdot \frac{\UComp(n)}{n}+1$
colors available to us in $L_r$ and keep the coloring of $\R$
proper. This completes the inductive step.
\end{proof}

\begin{corollary}
Let $\P$ be a family of $n$ pseudo-discs (i.e., a family of simple
closed Jordan regions, such that the boundaries of any two of
them intersect at most twice). Then $\ch(H(\P)) = O(1)$.
\end{corollary}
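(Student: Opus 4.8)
The plan is to derive this directly from part (iii) of Theorem~\ref{thm:choice}. The only external ingredient is the classical fact that pseudo-discs have \emph{linear} union complexity: the boundary of the union of any $m$ pseudo-discs carries $O(m)$ vertices (a theorem of Kedem, Livne, Pach, and Sharir). Thus, in the notation of Theorem~\ref{thm:choice}(iii), there is an absolute constant $c$ with $\UComp(m) \le cm$ for every $m$, and the theorem immediately gives $\ch(H(\P)) = O(\UComp(n)/n) = O(c) = O(1)$.

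One small point needs attention: Theorem~\ref{thm:choice}(iii) is stated under the hypothesis that $\UComp(m)/m$ be non-decreasing, which need not hold verbatim for the exact union-complexity function of $\P$. I would sidestep this by replacing $\UComp$ with its linear majorant $\widehat{\UComp}(m) := cm$; then $\widehat{\UComp}(m)/m = c$ is constant (hence non-decreasing) and $\widehat{\UComp} \ge \UComp$ pointwise. \lemref{silly}, applied to $\P$ and to each of its subfamilies, then still produces in the Delaunay graph of every induced subhypergraph of $H(\P)$ a vertex of degree $O(\widehat{\UComp}(n)/n) = O(1)$. Feeding this low-degree vertex into the induction of the proof of Theorem~\ref{thm:choice}(iii) --- color the remaining regions recursively from the shortened lists, then choose for the distinguished region one of the $O(1)$ colors in its list that avoids the $O(1)$ colors of its Delaunay neighbours --- yields $\ch(H(\P)) = O(1)$.

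I do not expect a genuine obstacle here: the mathematical substance is carried entirely by the cited linear union-complexity bound for pseudo-discs, and the rest is a routine specialization of Theorem~\ref{thm:choice}(iii). If anything, the only subtlety is the bookkeeping around the monotonicity assumption, handled above by passing to a linear upper bound on the union-complexity function.
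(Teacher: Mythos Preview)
Your proposal is correct and follows exactly the paper's approach: the corollary is deduced from Theorem~\ref{thm:choice}(iii) together with the Kedem--Livne--Pach--Sharir linear union-complexity bound for pseudo-discs. Your extra care in replacing $\UComp$ by the linear majorant $\widehat{\UComp}(m)=cm$ to satisfy the monotonicity hypothesis is a nice touch that the paper leaves implicit.
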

The corollary follows immediately from the fact that such a family
$\P$ has linear union complexity \cite{KLPS}, combined with
Theorem~\ref{thm:choice}.

\section{Open problems}
We consider the following as an interesting problem left open here:
\begin{itemize}

\item
Let $H$ be a hypergraph induced by $n$ axis-parallel rectangles
in the plane. Is it true that $\ch(H) = O(\log n)$? It is known
that $\chi(H) = \Theta(\log n)$ \cite{PTrects,smoro}.
\end{itemize}

\subsubsection*{Acknowledgments.} 
We wish to thank Emo Welzl and Yelena Yuditsky
for helpful discussions concerning the problems studied in this
paper.


\bibliographystyle{plain}
\bibliography{references}

\end{document}